\newtheorem{theorem}{Theorem} [section]
\newtheorem{corollary}[theorem]{Corollary}
\newtheorem{lemma}[theorem]{Lemma}
\newenvironment {proof} {{\it
Proof.}}{\hspace*{\fill}$\Box$\par\vspace{4mm}}
\newfont{\bb}{msbm10}
\def\:{\! :\!}
\newcommand\0{{\bf 0}}
\begin{document}

\title{Patterns of Alternating Sign Matrices}

 \author{Richard A. Brualdi, Kathleen P. Kiernan, \\Seth A. Meyer, Michael W. Schroeder\\
 Department of Mathematics\\
 University of Wisconsin\\
 Madison, WI 53706\\
 {\tt \{brualdi,kiernan,smeyer,schroede\}@math.wisc.edu}
 }

\maketitle

\centerline{\it In admiration, to Avi Berman, Moshe Goldberg, and Raphi Loewy}

 \begin{abstract} 
We initiate a study of the zero-nonzero patterns of $n\times n$ alternating sign matrices. We characterize the row (column) sum vectors of these patterns and determine their minimum term rank. In the case of connected alternating sign matrices, we find the minimum number of nonzero entries and characterize the case of equality. We also study symmetric alternating sign matrices, in particular, those with only zeros on the main diagonal. These give rise to alternating signed graphs without loops, and we determine the maximum number of edges in such graphs. We also consider $n\times n$ alternating sign matrices whose patterns are maximal within the class of all $n\times n$ alternating sign matrices.

\medskip
\noindent {\bf Key words and phrases: (symmetric) alternating sign matrix, ASM,  pattern, row (column) sum vector, alternating signed graph, term rank. } 

\noindent {\bf Mathematics  Subject Classifications: 05B20, 05C22, 05C50, 15B36} 
\end{abstract}

\section{Introduction}
An {\it alternating sign matrix}, henceforth abbreviated ASM, is an $n\times n$ $(0,+1,-1)$-matrix such that the $+1$s and $-1$s alternate in each row and column, beginning and ending with a $+1$. For example,
\[\left[\begin{array}{rrrrr}
0&0&+1&0&0\\
0&+1&-1&+1&0\\
+1&-1&+1&-1&+1\\
0&+1&-1&+1&0\\
0&0&+1&0&0\end{array}\right]\mbox{ and }
\left[\begin{array}{rrrrr}
0&0&0&+1&0\\
+1&0&0&-1&+1\\
0&+1&0&0&0\\
0&0&0&+1&0\\
0&0&+1&0&0\end{array}\right]\]
are ASMs.
Since every permutation matrix is an ASM, ASMs can be thought of as generalizations of permutation matrices. 
The following elementary properties  of  ASMs follow from their definition:
\begin{enumerate}
\item[\rm (i)] The number of nonzeros in each row and column
is odd.
\item[\rm (ii)] The first and last nonzero in each row and column is a $+1$.
\item[\rm (iii)] The first and last rows and columns contain exactly one $+1$ and no $-1$s.
\item[\rm (iv)] The partial row  sums of the entries in each row, starting from the first (or last) entry, equal 0 or 1. A similar property hold for columns. Moreover, all row and column sums equal 1.
\end{enumerate}
Reversing the order of the rows or columns of an ASM results in another ASM. More generally, thinking of an $n\times n$ matrix as an $n\times n$ square, applying  the action of an element of the dihedral group of order $8$ to an ASM results in another ASM. 
A  matrix obtained from an ASM  by row and column permutations  is generally not an ASM, even in the case of a simultaneous row and column permutation. In particular, being an ASM is not a combinatorial property of a matrix.

The substantial interest in ASMs in the mathematics community originated from the alternating sign matrix conjecture of Mills, Robbins,  and Rumsey  \cite{MRR} in 1983; see \cite{DB,JP,DPR} for a history of this conjecture and its relation with other combinatorial constructs.  This conjecture, proved by Zeilberger \cite{DZ} and also by Kuperberg \cite{GK}, asserts that the number of $n\times n$ ASMs equals
\[\frac{1!4!7!\cdots (3n-2)!}{n!(n+1)!(n+2)!\cdots (2n-1)!}.\]
In this paper we are concerned with the zero-nonzero pattern of an ASM.

An $n\times n$ ASM $A$ has a unique decomposition of the form
\[A=A_1-A_{2}\]
where $A_1$ and $A_{2}$ are $(0,+1)$-matrices without any $+1$ in the same position.
For example,
\begin{equation}\label{1eq:pattern}
\left[\begin{array}{rrrr}
0&+1&0&0\\
+1&-1&+1&0\\
0&0&0&+1\\
0&+1&0&0\end{array}\right]=
\left[\begin{array}{rrrr}
0&+1&0&0\\
+1&0&+1&0\\
0&0&0&+1\\
0&+1&0&0\end{array}\right]-
\left[\begin{array}{rrrr}
0&0&0&0\\0&+1&0&0\\0&0&0&0\\0&0&0&0\end{array}\right].\end{equation}
The {\it pattern} of an ASM $A$ is the $(0,+1)$-matrix
$\widetilde{A}=A_1+A_2$. The ASM in (\ref{1eq:pattern}) has pattern 
\[\left[\begin{array}{rrrr}
0&+1&0&0\\
+1&+1&+1&0\\
0&0&0&+1\\
0&+1&0&0\end{array}\right],\]
which, dropping the symbol $1$ or  dropping both symbols $1$ and $0$, we also write as
%
\[
\left[\begin{array}{rrrr}
0&+&0&0\\
+&+&+&0\\
0&0&0&+\\
0&+&0&0\end{array}\right]\mbox{ or }
\left[\begin{array}{r|r|r|r}
&+&&\\ \hline
+&+&+&\\ \hline
&&&+\\ \hline
&+&&\end{array}\right]
.\]

An $n\times n$  ASM $A$ can be viewed as a signed biadjacency matrix of a bipartite graph $G\subseteq K_{n,n}$, equivalently,  the biadjacency matrix of a signed bipartite graph,  where  the vertices in each part of the bipartition are linearly  ordered and where the edges are signed $\pm 1$.  Similarly,  $n\times n$ symmetric ASMs   can be viewed as  the signed adjacency matrix of a loopy\footnote{Loopy refers to the fact that loops are allowed but not required.} graph $G\subseteq K_n$, equivalently, the adjacency matrix of a signed loopy graph, where the vertices are   linearly ordered and  where the edges are labeled $\pm 1$.
 As appropriate, we  use the terminology {\it alternating signed bipartite graph}, abbreviated ASBG,  and {\it alternating signed loopy graph}, abbreviated ASLG. If there are only 0s on the main diagonal of a symmetric ASM, that is, there are no loops in the graph, then  we have an {\it alternating signed graph}, abbreviated ASG. 

Our main goal in this paper is to investigate possible patterns of ASMs
and to determine their properties.
In Section 2 we discuss some basic properties of patterns of ASMs.
 The maximum number of nonzeros in an $n\times n$ ASM (an ASBG) is easily determined along with the case of equality.
 The minimum number of nonzero entries in an $n\times n$ ASM is $n$, since permutation matrices are ASMs.
Let $A$ be an $n\times n$ ASM, and let $R=(r_1,r_2,\ldots,r_n)$ and $S=(s_1,s_2,\ldots,s_n)$ be the row sum vector and column sum vector of the pattern of $A$. Then the components of $R$ and $S$ 
are odd positive integers with $r_1=s_1=r_n=s_n=1$ and
\[r_1+r_2+\cdots+r_n=s_1+s_2+\cdots+s_n.\]
We  characterize the possible row sum (column sum) vectors of patterns of ASMs.  In Section 3 we determine the minimum number of nonzeros in an ASM whose associated signed bipartite graph is connected.
 In Section 4 we consider ASMs which are maximal in the sense that  it is not possible to change one or more  $0$ to  $\pm1$ resulting in another ASM.
 Recall that the {\it term rank} $\rho(X)$ of a matrix $X$ is the maximum  possible number of nonzeros in $X$ with no two from the same row and column, and that by the K\"onig-Eg\'ervary theorem, $\rho(X)$ is the minimum number of rows and columns of $X$ that contain all the nonzeros of $X$. The term rank of an $n\times n$ ASM may be $n$, as the permutation matrices show. In  Section 5  we determine the smallest possible term rank of an $n\times n$ ASM.
  In Section 6 we determine the maximum  number of edges in an ASG of order $n$, that is, the maximum  number of nonzeros in  an $n\times n$ symmetric ASM with only $0$s on the main diagonal.  We also  determine when equality occurs.
  Some concluding remarks are given in Section 7.

\section{Patterns of ASMs}

We first observe that there is a simple construction which  shows that  given any $k\times l$ $(0,+1,-1)$-matrix $B$, there is an ASM that contains $B$ as a  submatrix.  To construct such an ASM, we proceed  as follows:

\begin{enumerate}
\item[\rm (a)]
  If some row of $B$ is a zero row, we put in  a new column anywhere which has a $+1$ in that row and $0$s elsewhere.
 \item[\rm (b)] If in $B$ the first nonzero entry of some row is a $-1$, we put in a new column anywhere to its left whose only nonzero entry is  a $+1$ in that row.
 \item[\rm (c)] If the last nonzero entry of some row of $B$  is a $-1$, we put in a new column anywhere to its right whose only nonzero entry is a $+1$ in that row. 
 \item[\rm (d)] If $B$ has two $-1$s  in row $i$, with no nonzeros in between, we put in a new column anywhere in between the two $-1$s whose only nonzero entry is a $+1$ in that row $i$. 
 \item[\rm (e)] If $B$ has two $+1$s in row $i$ with no nonzeros in between, we put in a  new column anywhere between the two $+1$s and two new rows, one anywhere above row $i$ and one anywhere below  row $i$. The new column has a $-1$  in row $i$ and a $+1$ in the two new rows.
 \item[\rm (f)]
 We repeat (a)-(e)  using   the  columns of $B$ in place of the rows.
 \end{enumerate}
Applying this construction, we obtain  an ASM $A$ containing $B$ as a submatrix; we say that such an $A$ is obtained from $B$ by an {\it elementary ASM expansion} of $B$. In general, an ASM containing $B$ as a submatrix is an {\it ASM expansion} of $B$. It follows from our construction that there can be no forbidden submatrix condition that can be used to characterize ASMs.

Now let $A=[a_{ij}]$ be an $n\times n$ ASM
with pattern $\widetilde{A}$. 
Let $R=(r_1,r_2,\ldots,r_n)$ and $S=(s_1,s_2,\ldots,s_n)$ be the row sum vector and column sum vector of $\widetilde{A}$. 

Let ${\bold  j}_n$ equal the $n$-vector $(1,1,\ldots,1)$ of all 1s, and let
${\bold  k}_n$ be the $n$-vector $(1,3,5,\ldots,5,3,1)$ which reads backward the same as it reads forward. Thus
${\bold k}_6=(1,3,5,5,3,1)$ and ${\bold k}_7=(1,3,5,7,5,3,1)$.
There is a well-known special $n\times n$ ASM $D_n$ which we call the {\it diamond ASM} and  illustrate in (\ref{eq:D_n}) for $n=6$ and $7$, it being obvious how to generalize to arbitrary $n$:
\begin{equation}\label{eq:D_n}
D_6=\left[\begin{array}{c|c|c|c|c|c}
&&&+&&\\ \hline
&&+&-&+&\\ \hline
&+&-&+&-&+\\ \hline
+&-&+&-&+&\\ \hline
&+&-&+&&\\ \hline
&&+&&&\end{array}\right],\quad
D_7=\left[\begin{array}{c|c|c|c|c|c|c}
&&&+&&&\\ \hline
&&+&-&+&&\\ \hline
&+&-&+&-&+&\\ \hline
+&-&+&-&+&-&+\\ \hline
&+&-&+&-&+&\\ \hline
&&+&-&+&&\\ \hline
&&&+&&&\end{array}\right].
\end{equation}
For $n$ even, the matrix obtained from $D_n$ by taking its rows in the reverse order  is also an ASM and we also refer to it as a diamond ASM. The row and column sum vectors of $D_n$ equal ${\bf k}_n$.

\begin{lemma}\label{lem:pat1}
If $A$ is an $n\times n$ ASM whose pattern $\widetilde{A}$ has row sum vector $R$ and column sum vector $S$, then
\[{\bold j}_n\le R,S \le {\bold k}_n\;
 (\mbox{entrywise}).\]
 Moreover,  $R=S={\bold j}_n$ if and only if  $A$ is a permutation matrix, and $R=S={\bold k}_n$ if and only if $A$ is a diamond ASM.
\end{lemma}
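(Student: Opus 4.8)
My plan is to establish the entrywise bounds ${\bold j}_n \le R, S \le {\bold k}_n$ first, and then handle the two equality cases separately. For the lower bound $R, S \ge {\bold j}_n$: every row and column of an ASM has an odd number of nonzeros, and by property (iv) all row and column sums of $A$ equal $1$, so no row or column of $\widetilde A$ can be zero; hence every component of $R$ and $S$ is at least $1$. For the upper bound, I would argue positionally. Consider row $i$ of $\widetilde A$. By property (iii) the first and last rows and columns contribute nothing beyond the obvious, so fix $2 \le i \le n-1$; the nonzeros in row $i$ alternate in sign starting and ending with $+1$. The key observation is that in column $j$, the nonzeros also alternate starting and ending with $+1$, so the number of nonzeros in column $j$ strictly above row $i$ plus the entry in position $(i,j)$ (if nonzero) is constrained: reading column $j$ from the top, the partial sums are $0$ or $1$, which forces the pattern. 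The cleanest route is: a nonzero entry in position $(i,j)$ with $j$ small (say $j \le i-1$ after accounting for the diamond shape) is "supported" by the column partial-sum condition only if enough rows lie above it. Concretely, in column $j$ the $t$-th nonzero (counting from the top) occurs in some row $\ge 2t-1$; symmetrically from the bottom; and the analogous statement holds for rows. Combining the row-count and column-count constraints at position $(i,j)$ yields that a nonzero at $(i,j)$ requires $j \ge$ (something like) the $(i$-th diamond bound$)$, which is exactly the statement $r_i \le ({\bold k}_n)_i$ and $s_j \le ({\bold k}_n)_j$. I expect this counting argument — pinning down precisely why a nonzero in an "early" column of a middle row is impossible — to be the main obstacle; the bookkeeping with the two alternation conditions (rows and columns simultaneously) needs care.

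For the equality case $R = S = {\bold j}_n$: if every row and column of $\widetilde A$ has exactly one nonzero, then $\widetilde A$ is a permutation matrix, and since $\widetilde A = A_1 + A_2$ with $A_1, A_2$ nonnegative and $A = A_1 - A_2$ having all row sums $1$, the single nonzero in each row must be a $+1$ (the decomposition forces $A_2 = 0$). Hence $A = A_1$ is a $(0,1)$-matrix with one $1$ per row and column, i.e.\ a permutation matrix. The converse is immediate.

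For the equality case $R = S = {\bold k}_n$: here I would argue by induction on $n$, or directly by peeling. If every middle row $i$ achieves $r_i = ({\bold k}_n)_i = 2\min(i, n+1-i) - 1$ and likewise for columns, then the nonzero positions are forced to lie exactly in the diamond-shaped band, and within that band the alternation-starting-and-ending-with-$+1$ condition, together with all partial row/column sums being $0$ or $1$, determines every sign uniquely: a staircase of $+1$s and $-1$s which is precisely $D_n$ (or its row-reversal when $n$ is even). The inductive step: the top row has its single $+1$ in the center column; the column alternation then forces the entries below; stripping the outer frame reduces to the $(n-2)\times(n-2)$ case with the same property. The converse — that the diamond ASM has row and column sum vectors ${\bold k}_n$ — is stated in the excerpt. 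I do not anticipate serious difficulty here beyond making the "forced staircase" argument rigorous, which is routine once the support is pinned to the diamond band.
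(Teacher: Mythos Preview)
Your treatments of the lower bound and of the equality case $R=S={\bold j}_n$ are fine and match the paper. The gap is in the upper bound $R,S\le{\bold k}_n$, where your outline never lands on a working argument. In particular, the claim ``in column $j$ the $t$-th nonzero (counting from the top) occurs in some row $\ge 2t-1$'' is false: in $D_7$ the middle column has nonzeros in rows $1,2,\ldots,7$, so the second nonzero is in row $2$, not row $\ge 3$. More broadly, trying to pin down \emph{positions} of nonzeros via simultaneous row and column alternation is the hard way around; you correctly sense this (``the bookkeeping\ldots needs care''), but no concrete mechanism emerges.

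The paper's device is much simpler and you should adopt it: to bound $r_i$, count the $-1$s in row $i$. A $-1$ in position $(i,j)$ forces the partial column sum $\sum_{p<i}a_{pj}$ to equal $1$. But the leading $(i-1)\times n$ submatrix has all row sums equal to $1$ and all column sums in $\{0,1\}$, so exactly $i-1$ columns have partial sum $1$. Hence row $i$ has at most $i-1$ entries equal to $-1$, hence at most $i$ entries equal to $+1$, hence $r_i\le 2i-1$. Running the same argument from the bottom gives $r_i\le 2(n+1-i)-1$, and together these give $r_i\le({\bold k}_n)_i$. This replaces your positional analysis entirely.

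For the equality case $R=S={\bold k}_n$, your peeling/induction is plausible but heavier than needed (and note that deleting the first and last rows and columns of an ASM does not in general leave an ASM, so the inductive step would need care). The paper instead looks directly at the middle: when $n$ is odd, row $(n+1)/2$ has $n$ nonzeros, hence is the fully alternating row $[+\; -\; +\;\cdots\; +]$, which then forces the entire matrix; when $n$ is even, each of the two middle rows has $n-1$ nonzeros, and the single zero in each is forced to the first column of one and the last column of the other, again determining $A$ as $D_n$.
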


\begin{proof}
The assertions involving  ${\bold j}_n$ are obvious since an ASM must have at least one nonzero in each row and column, and permutation matrices are ASMs. The assertions involving ${\bold k}_n$ are also straightforward. The first and last row (respectively, column) of an ASM can contain only one $+1$ and no $-1$s. Let $i>1$.
The $-1$s in row $i$ of an ASM can only be in those columns for which the corresponding column sum of the leading $(i-1)\times n$ submatrix $A_{i-1}$ of $A$ equals 1.
 Since the row sums of $A_{i-1}$ equal $1$ and the column sums
 of $A_{i-1}$ are nonnegative, exactly $i-1$ 
 of the column sums of  $A_{i-1}$  are equal to $1$.  
 We conclude that there are at most $i$\;  $+1$s and at most $(i-1)$\; $-1$s in row $i$, and hence at most  $2i-1$ nonzeros in row $i$. The same argument applies to the column sums taken from bottom to top. This proves that $R\le {\bold k}_n$, and in a similar way we get that $S\le {\bold k}_n$.
 
 If $n$ is odd and $R=S={\bf k}_n$, then row $(n+1)/2$ has only nonzero entries,  alternating between $+1$ and $-1$, and the ASM is uniquely determined as $D_n$. If $n$ is even, then rows $(n/2)\pm 1$ contain exactly one $0$.  Because $R={\bold k}_n$, this unique $0$ must be in the first column of one of these rows and the last column of the other, and we again get that $A$ is $D_n$.
\end{proof}

The linear constraints  ${\bold j}_n\le R,S \le {\bold k}_n$, $r_1=r_n=s_1=s_n=1$,  and $r_1+r_2+\cdots+r_n=s_1+s_2+\cdots+s_n$ for vectors $R=(r_1,r_2,\ldots,r_n)$ and $S=(s_1,s_2,\ldots,s_n)$  of positive odd integers  are not sufficient to guarantee the existence of an $n\times n$ ASM whose pattern has row sum vector $R$ and column sum vector $S$. For example, 
if $n=5$ and $R=S=(1,1,5,1,1)$, there is no $5\times 5$ ASM $A$ whose pattern  has this row and column sum vector. This is because the third row of such an $A$ 
must be $[+\  -\ +\  -\ +]$ and the third column must be the transpose of this vector. Hence the row sum and column sum vector of such an $A$ must be ${\bold k}_5$ and not $(1,1,5,1,1)$. This leads to another necessary condition for integral $n$-vectors $R=(r_1,r_2,\ldots,r_n)$ and $S=(s_1,s_2,\ldots,s_n)$, namely,
\[\frac{r_i-1}{2}\le  \left|\{j:s_j\ge 3\}\right|\quad (1\le i\le n).\]
This is because row $i$ of an ASM contains $(r_i-1)/2$\;  $-1$s and so there are at least this many columns with at least 3 nonzeros. This kind of argument can be carried further. Consider consecutive rows $i$ and $i+1$ of an ASM. Their $-1$s occur in different columns. Thus there are at least 
\[\frac{r_i-1}{2}+\frac{r_{i+1}-1}{2}\]
columns with a $-1$ and thus at least this many columns with at least three nonzeros.
 Therefore
\[\frac{r_i-1}{2}+\frac{r_{i+1}-1}{2}\le \left|\{j:s_j\ge 3\}\right|\quad (1\le i\le n).\]

There do not seem to be any easily formulated necessary and sufficient conditions for $R$ and $S$ to be the row and column sum vectors of an ASM. However,
the inequalities ${\bold j}_n\le R \le {\bold k}_n$ for  an $n$-vector $R$ of  positive odd integers are  sufficient to guarantee the existence of an $n\times n$ ASM whose pattern has row sum vector $R$.

\begin{theorem}\label{th:pat1}
Let $R=(r_1,r_2,\ldots,r_n)$ be a vector of positive odd integers.
 Then there is an $n\times n$ ASM whose pattern has row sum vector $R$ if and only if  
 \begin{equation}\label{eq:suff} {\bold j}_n\le R \le {\bold k}_n.\end{equation}
\end{theorem}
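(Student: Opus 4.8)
The plan is to prove the nontrivial direction — sufficiency — by an explicit construction that builds an $n\times n$ ASM from any target row sum vector $R=(r_1,\ldots,r_n)$ of positive odd integers satisfying $\mathbf{j}_n\le R\le \mathbf{k}_n$. The necessity is already contained in Lemma~\ref{lem:pat1}. The key observation I would exploit is that the diamond ASMs $D_m$ (and their reversals) are exactly the ``maximal'' building blocks: row $i$ of $D_n$, for $i$ up to the middle, has exactly $2i-1$ nonzeros, which is the extremal value allowed by $\mathbf{k}_n$. So the strategy is to superimpose a suitable collection of small diamond-type blocks along the diagonal, one for each ``bump'' in $R$, so that the nonzero counts add up correctly while the alternating-sign condition is preserved in every row and column.

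Concretely, I would first handle the case where $R$ is ``unimodal'' (weakly increasing then weakly decreasing) using a single diamond-like core: place an appropriately sized diamond pattern centered at the peak and pad with the standard $+1$-only rows/columns that the boundary conditions $r_1=r_n=1$ force; since $R\le\mathbf{k}_n$, there is always enough room to taper. For the general case, the idea is to decompose $R$ into a sum of unimodal pieces — equivalently, write $R$ as an overlay of ``staircase'' contributions — and realize each piece by a block that is a (possibly reversed) diamond ASM of the appropriate order, arranged so that consecutive blocks share exactly the rows/columns needed to glue the alternating pattern together. An alternative, and perhaps cleaner, inductive route is: if $r_k\ge 3$ for some interior $k$, locate a maximal interval on which $R$ exceeds the all-ones vector, apply an elementary ASM expansion (construction (a)--(f) from Section~2, particularly step (e)) in reverse to reduce two of the nonzeros in a row, and induct on $r_1+\cdots+r_n$; the constraint $R\le\mathbf{k}_n$ is exactly what guarantees that the reduced vector is still achievable and that the expansion can be performed without violating oddness or the end conditions.

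The main obstacle I anticipate is bookkeeping the interaction between the row constraint and the column constraint during the construction: although the theorem only prescribes $R$, the matrix we build has \emph{some} column sum vector $S$, and we must ensure $\mathbf{j}_n\le S\le\mathbf{k}_n$ holds automatically — otherwise no ASM exists at all. The earlier discussion in Section~2 shows these interactions are genuinely delicate (e.g.\ $R=(1,1,5,1,1)$ forces $S=\mathbf{k}_5$). So the construction must be chosen carefully enough that the induced $S$ never overshoots $\mathbf{k}_n$; I expect that building from diamond blocks makes this transparent, because each diamond block is self-consistent on both sides, but verifying that the gluing does not create a column with too many nonzeros near a seam will require the bulk of the case analysis. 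A secondary nuisance is the parity of $n$ and the two flavors of diamond ASM for $n$ even, which will force a small case split at the very end of the argument.
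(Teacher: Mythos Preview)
Your proposal correctly isolates column control as the crux, but neither of your two routes actually resolves it, and this is a genuine gap rather than routine bookkeeping. The diamond-block decomposition is not well-defined: an arbitrary $R$ with $\mathbf{j}_n\le R\le\mathbf{k}_n$ need not decompose into row-sum vectors of diamond sub-ASMs in any way that lets the blocks be glued into a single ASM --- overlaying ASMs does not in general produce an ASM, and sharing rows or columns between diamond blocks will typically destroy the alternating condition at the seams, which is precisely the ``case analysis'' you defer but never carry out. Your alternative inductive route is likewise incomplete: the elementary ASM expansion of Section~2 (including step~(e)) changes the order of the matrix, so ``reversing'' it is not an operation on $n\times n$ ASMs that decreases some $r_k$ by~$2$; and if instead you mean to take an ASM realizing $(r_1,\ldots,r_k{-}2,\ldots,r_n)$ and insert a $+1$ and a $-1$ into row~$k$, you must exhibit two columns in which this is compatible with the column alternating condition --- which is the unresolved column-control problem again, and already fails for $I_3$ with $k=2$.

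The paper sidesteps all of this with a direct row-by-row construction whose key invariant you have not hit upon. It builds the top half of the matrix (for $n=2m{+}1$, an $(m{+}1)\times(2m{+}1)$ block $A_{m+1}$) one row at a time, maintaining at every stage the \emph{exact} column-sum vector $(1,0,1,0,\ldots,0,1)$: to pass from $A_k$ to $A_{k+1}$ one inserts two new all-zero columns at positions determined by $r_{k+1}$ and then appends a row $[+\ -\ +\ \cdots\ +\ 0\ \cdots\ 0]$ with $r_{k+1}$ nonzeros, so that each new $-1$ lands in a column of current sum~$1$ and each new $+1$ in a column of current sum~$0$. The bottom half is built the same way from $(r_n,\ldots,r_{m+2})$, then bordered and row-reversed so that its column sums are $(0,1,0,\ldots,1,0)$; stacking the two halves yields all column sums equal to~$1$ and the column alternating condition automatically. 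The even case is handled by the same device. It is this fixed alternating column-sum invariant --- not any diamond decomposition --- that makes the column side trivial rather than delicate.
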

 
 \begin{proof} We note that (\ref{eq:suff}) implies that $r_1=r_n=1$.
 By Lemma \ref{lem:pat1}, (\ref{eq:suff}) is satisfied by  the row sum vector of the pattern of an $n\times n$ ASM. Now suppose that (\ref{eq:suff}) holds. We show how to construct an ASM whose pattern has row sum vector $R$. The ASM  is obtained by combining the results of two  smaller constructions which we now describe. We first assume that $n$ is an odd integer, $n=2m+1$.

 \smallskip\noindent
 Construction, Part I:  For $1\le k\le m+1$ we inductively construct a $k\times (2k-1)$ $(0,+1,-1)$-matrix $A_k$ such that the rows satisfy the alternating $\pm1$ condition of ASMs with the row sums of its pattern equal to $(r_1,r_2,\ldots,r_k)$, and the columns also satisfy the alternating $\pm 1$ condition from top to bottom except for the fact that the full column sum vector is $(1,0,1,0,\ldots,1,0,1)$ (and not $(1,1,\ldots,1)$ as it would be for an ASM). If $k=1$, then  $A_1=[+ ]$. If $k=2$, then 
 \[A_2=\left[\begin{array}{rrr}
 0&0&+\\
 +&0&0\end{array}\right]\mbox{ if $r_2=1$, and }
 A_2=\left[\begin{array}{rrr}
 0&+&0\\ +&-&+\end{array}\right]\mbox{ if $r_2=3$.}\]
Suppose that $k\ge 2$ and we have constructed $A_{k}$ to satisfy our requirements. If $r_{k+1}=2k+1$, then we border $A_k$ with a zero column on the left and right, and adjoin the $(2k+1)$-vector
$[+\  0\  +\  0\ \ldots\  +\ 0\ +]$ as a new row. The resulting $(k+1)\times (2k+1)$-matrix $A_{k+1}$ satisfies our requirements. Now suppose that $r_{k+1}<2k+1$. Then  we adjoin two zero columns to $A_k$, a zero column on the left and a zero column between the $r_{k+1}$ and $r_{k+1}+1$ columns of $A_k$; we then adjoin as a new row the $(2k+1)$-vector  
$[+\ -\ +\ -\ \ldots\  +\ -\ +\ 0\ \ldots\ 0]$.
The resulting $(k+1)\times (2k+1)$ matrix $A_k$ satisfies our requirements.

In this part of the  construction, we finish with an $(m+1)\times (2m+1)$ $(0,+1,-1)$-matrix $A_{m+1}$ satisfying our requirements where the row sum vector of the pattern of $A_{m+1}$ is $(r_1,r_2,\ldots,r_{m+1})$. Since the column sum vector of $A_{m+1}$ is $(1,0,1,0,\ldots,1,0,1)$, the last nonzero entry, if there is a nonzero entry,  in its odd numbered columns is a $1$, and in the even numbered columns it is a $-1$.

 \smallskip\noindent
 Construction,  Part II: Using Construction I  we can obtain an $m\times (2m-1)$ $(0,+1,-1)$-matrix $A'_m$,
such that the rows of $A'_m$ satisfy the alternating $\pm1$ condition of ASMs with the row sums of its pattern equal to $(r_{2m+1},r_{2m},\ldots,r_{m+2})$, and the columns also satisfy the alternating $\pm 1$ condition  except for the fact that the full column sums are $(1,0,1,0,\ldots,1,0,1)$. Let $A''_m$ be the matrix obtained by bordering $A'_m$ by a zero column on the left and right, and then taking its rows in the reverse order. The columns sums of $A''_m$ are $(0,1,0,1,\ldots,0,1,0)$. The first nonzero entry of the odd numbered columns of $A''_m$, if there is one,  is a $-1$ while the first nonzero entry in the even numbered columns of $A''_m$ is a $+1$. It follows that the matrix
\[\left[\begin{array}{cc} 
 A_{m+1}\\ A''_m\end{array}\right]\]
 is a $(2m+1)\times  (2m+1)$  ASM whose pattern has row sum vector
 $R$.
 
 Now assume that $n$ is an even integer $n=2m$. Then using the ideas above, we obtain an $m\times (2m-1)$ $(0,+1,-1)$-matrix whose rows satisfy the alternating $\pm 1$ condition of ASMs with the row sums of its pattern equal to $(r_1,r_2,\ldots,r_m)$ and whose columns satisfy the alternating $\pm 1$ condition except for the fact that the full column sums equal $(1,0,1,0,\ldots,1)$. We then append a column of all $0$s to obtain an $m\times 2m$ matrix $A_m$ with full column sums  equal
 to  $(1,0,1,0,\ldots,1,0)$. We also obtain an $m\times 2m$ $(0,+1,-1)$-matrix $A_m'$ whose rows satisfy the alternating $\pm 1$ condition of ASMs with the row sums of its pattern equal to $(r_{2m},r_{2m-1},\ldots,r_{m+1})$ and whose columns satisfy the alternating $\pm 1$ condition except for the fact that the full column sums equal $(0,1,0,1,\ldots,0,1)$. Taking the rows of $A_m'$ in the reverse order
to produce the matrix $A_m''$, we obtain the required $2m\times 2m$
ASM
\[\left[\begin{array}{cc}
A_m\\ A_m''\end{array}\right].\]
  \end{proof}

\section{Minimal Connected ASMs}

Let $A=[a_{ij}]$ be an $n\times n$ ASM, and let $BG(A)\subseteq K_{n,n}$ be the alternating signed bipartite graph (ASBG) determined by $A$.
 If $BG(A)$ is  connected, then we call $A$ a {\it connected ASM}; otherwise, $A$ is a {\it disconnected ASM}. It follows easily that there exist permutation matrices $P$ and $Q$ such that
\begin{equation}\label{eq:conn}
PAQ=A_1\oplus A_2\oplus\cdots\oplus A_h\quad (h\ge 1)\end{equation}
where $A_1,A_2,\ldots,A_h$ are also ASMs and $BG(A_i)$ is connected for all $i$. If, for instance, $A$ is a permutation matrix, then in (\ref{eq:conn}) we get $h=n$ and $PAQ=I_n$.  In this section we show that the minimum number of nonzero entries of an $n\times n$ connected ASM equals $2n-1$ (thus $BG(A)$ is a tree) if $n$ is odd, and equals
$2n$ (thus $BG(A)$ is a unicyclic graph whose unique cycle has even length) if $n$ is even. In each case, we give a recursive  construction to obtain  all graphs attaining equality. If $A$ is an $n\times n$ ASM, then the degrees of all the vertices of $BG(A)$ are odd; if $BG(A)$ is a tree, then the sum of the degrees of the vertices in one part of the bipartition  equals  $2n-1$, the number  of edges of $BG(A)$. 
Hence $BG(A)$ can be a tree only if $n$ is odd.

Recall that the diamond ASM $D_3$ equals
\[\left[\begin{array}{c|c|c}
&+&\\ \hline
+&-&+\\ \hline
&+&\end{array}\right].\]
The signed bipartite graph $BG(D_3)$ is  shown in Figure 1.

\smallskip
\begin{center}
\begin{tikzpicture}
 \filldraw    (0,0) circle (3pt)
                  (1,0) circle (3pt) 
               (2,1) circle (3pt)
               (2,-1) circle( 3pt)
               (-1,1) circle (3pt)
               (-1,-1) circle (3pt);
             
\draw[thick]                   (1,0) -- (2,1) node[above,left=10pt]{$+$};
\draw[thick]                    (1,0)-- (2,-1) node[below,left=10pt]{$+$};
           \draw[thick]         (0,0)--(-1,1) node[above,right=10pt]{$+$};
\draw[thick]                     (0,0)--(-1,-1) node[below,right=10pt]{$+$};
\draw[thick] (0,0)--(1,0) node[left=12pt,above]{$-$};

\draw[xshift=1cm,yshift=-2.5cm] node {\bf Figure 1: $BG(D_3)$};
                    
\end{tikzpicture}
\end{center}
 
Let $A$ be an $n\times n$ ASM. Then we use the notation
$A\ast D_3$ to denote an $(n+2)\times (n+2)$ matrix obtained from $A$ by
identifying a $+$ of $A$ with a $+$ of $D_3$ and inserting two new rows and two new columns in order to embed $D_3$   in the resulting matrix. Neither the two new rows nor two new columns need be consecutive, but they must retain the same relative order as in $D_3$. 
The matrix $A\ast D_3$ is also an ASM, and we say that $A\ast D_3$ results from $A$ by {\it attaching} $D_3$.
The bipartite graph $BG(A\ast D_3)$  is obtained by identifying a positive edge of $BG(D_3)$ with a positive edge of $BG(A)$.
 For instance, if
\[A=\left[\begin{array}{c|c|c|c|c}
&&+&&\\ \hline
&+&-&&+\\ \hline
+&-&\cellcolor[gray]{0.8} +&&\\ \hline
&+&-&+&\\ \hline
&&+&&\end{array}\right],\]
then one possible $A\ast D_3$ is the matrix
\[\left[\begin{array}{c|c|c|c|c|c|c}
&&+&&&& \\ \hline 
&+&-&&&+& \\ \hline 
&&&\cellcolor[gray]{0.8} +&&& \\ \hline 
+&-&\cellcolor[gray]{0.8} +&\cellcolor[gray]{0.8} -
&&&\cellcolor[gray]{0.8} + \\ \hline 
&+&-&&+&& \\ \hline 
&&&\cellcolor[gray]{0.8} +&&& \\ \hline 
&&+&&&& \end{array}\right],\]
where the $+$ of $A$ in its position $(3,3)$ is identified with the $+$ of $D_3$ in its position $(2,1)$, and the two new rows are rows 3 and 6 and the two new columns  are columns 4 and 7. Deleting rows 3 and 6 and columns 4 and 7  of $A\ast D_3$ results in the original ASM $A$. 

Since $BG(D_3)$ is a tree, then by
starting with $I_1$ and attaching $D_3$s  we can construct for every odd integer $n$ an $n\times n$ connected ASM $A$ with $BG(A)$ a tree. Now suppose that $n$ is an even integer with $n\ge 4$. Then there are $n\times n$ connected ASMs with exactly $2n$ nonzero entries. One such basic family of ASMs is obtained as follows:  Let $m\ge 2$ be an integer, and  consider a cycle $C$ of even length $2m$ whose edges are (arbitrarily) either positive or negative. Depending on the signs of the edges at a vertex of $C$, we attach new edges to $C$ as follows: (i) if two positive edges meet at a vertex of $C$  we attach a negative edge to a new vertex and then two positive edges to that new vertex, (ii) if one positive and one negative edge meet at a vertex we attach a positive edge to a new vertex, (iii) if two negative edges meet at a vertex we attach three positive edges to new vertices. Since the number of edges attached to each vertex of $C$  is odd and $C$ has an even number of vertices, the total number of vertices of the resulting graph is even. If, for instance, the edges of $C$ alternate in sign, then we attach a positive (pendent) edge to each vertex of $C$. It is easy to see that   the resulting graph $G\subseteq K_{2m,2m}$ can always be realized (in many ways) as a $2m\times 2m$ ASM with exactly $2\cdot 2m$ nonzero entries; in fact, this construction is just a graphical description of the elementary ASM expansion construction discussed at the beginning of Section 2. For example, with $m=3$ and a cycle of length 6 whose edges alternate in sign,  we have
\[\left[\begin{array}{c|c|c|c|c|c}
&&&+&&\\ \hline
&&\cellcolor[gray]{0.8}  +&\cellcolor[gray]{0.8} -&&+\\ \hline
&+&&&&\\ \hline
+&\cellcolor[gray]{0.8} -&&\cellcolor[gray]{0.8} +&&\\ \hline
&\cellcolor[gray]{0.8} +&\cellcolor[gray]{0.8} -&&+&\\ \hline
&&+&&&\end{array}\right].\]
With $m=3$ and a cycle of length 6 with edges $+,+,-,+,-,-$, we have the $8\times 8$ ASM
\[\left[ \begin{array}{c|c|c|c|c|c|c|c}
&&+&&&&&\\ \hline
&\cellcolor[gray]{0.8} +&-&&\cellcolor[gray]{0.8} +&&&\\ \hline
&&+&&&&&\\ \hline
&&&+&\cellcolor[gray]{0.8} -&&\cellcolor[gray]{0.8} +&\\ \hline
+&\cellcolor[gray]{0.8} -&&&&+&\cellcolor[gray]{0.8} -&+\\ \hline
&&&&&&+&\\ \hline
&&&&+&&&\\ \hline
&+&&&&&&\end{array}\right].\]
We call an ASM constructed in this way a {\it basic unicyclic ASM}.

Given an  $n\times n$  connected ASM $A$ with $2n$ nonzero entries, then each matrix of the form $A\ast D_3$ is an $(n+2)\times (n+2)$ connected ASM with $2(n+2)$ nonzero entries.

\begin{theorem}\label{th:tree}
The minimum number of nonzero entries in an $n\times n$ connected ASM equals $2n-1$ if $n$ is odd and $2n$ if $n$ is even. If $n$ is odd, then an $n\times n$ connected ASM $A$ has exactly $2n-1$ nonzero entries if and only if $A$ equals  the $1\times 1$ identity matrix $I_1$ or $A$ can be obtained from  $I_1$ by recursively attaching $D_3$s. If $n$ is even, then  an $n\times n$ connected ASM $A$ has exactly $2n$ nonzero entries if and only if $A$ is either a basic unicyclic ASM or can be obtained from a basic unicyclic ASM  by recursively attaching $D_3$s.
\end{theorem}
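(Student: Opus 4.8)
The plan is to prove the statement in three stages: the numerical bound, its attainment by the two given constructions, and — the substantial part — the structural description of the extremal cases, by induction on $n$ via an operation inverse to "attaching a $D_3$". \emph{Bound and attainment.} If $A$ is connected then $BG(A)$ is a connected graph on $2n$ vertices, so it has at least $2n-1$ edges (= nonzeros of $A$), with equality exactly when $BG(A)$ is a spanning tree. For a tree the bipartite identity gives $\sum_i\deg(i)=|E(BG(A))|=2n-1$ over the $n$ rows $i$, and a sum of $n$ odd numbers is $\equiv n\pmod 2$; hence $n$ is odd, so for even $n$ one needs at least $2n$ nonzeros. Conversely, gluing $D_3$ onto $I_1$ repeatedly along positive edges keeps $BG$ a tree (two trees glued along one edge), giving for each odd $n$ a connected $n\times n$ ASM with $2n-1$ nonzeros; and each basic unicyclic ASM has $2m$ cycle edges plus an odd positive number of further edges at each cycle vertex summing to $2m$, hence $2\cdot 2m=2n$ nonzeros. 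Since attaching a $D_3$ raises $n$ by $2$ and the nonzero count by exactly $4$ (five nonzeros of $D_3$ minus the identified one), an immediate induction gives the ``if'' direction of both characterizations.

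For the ``only if'' direction I would induct on $n$, writing \emph{line} for a row or column of $A$ (equivalently a vertex of $BG(A)$). The base cases are $n=1$, where $A=I_1$, in the odd case, and $n=4$ in the even case, where counting vertices and edges against oddness of degrees forces $BG(A)$ to be a $4$-cycle of alternating signs with one pendant vertex at each cycle vertex, which is a basic unicyclic ASM. The core of the induction is a \emph{detachment lemma}: if $A$ is connected and extremal with $n\ge 3$ odd, or with $n$ even and $A$ not a basic unicyclic ASM, then $BG(A)$ has a vertex $v$ with a pendant neighbour $\ell$ (through a $+$ edge) and a degree-$3$ neighbour $w$ whose other two neighbours are pendants, with $vw$ a $-$ edge, such that the entries at $\ell$ and $w$ are consecutive among the nonzeros of the $v$-line; moreover $w$ and its pendants lie off the cycle, if there is one. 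Deleting $w,\ell$ and the two pendants of $w$ then leaves a connected extremal ASM $A'$ on $n-2$ indices with $A=A'\ast D_3$ — the ``consecutive'' condition being exactly what keeps the shortened $v$-line alternating, and the unique cycle being untouched — so by induction $A'$, and hence $A$, has the asserted form.

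To prove the lemma in the tree case I would take a longest path $v_0v_1\cdots v_k$ in $BG(A)$. Every neighbour of $v_1$ other than $v_2$ is a pendant, and since the $-1$'s of the $v_1$-line lie in non-pendant lines while $v_2$ is $v_1$'s only non-pendant neighbour, $\deg v_1=3$ and $v_1v_2$ is a $-$ edge; so $v_1=w$ and $v:=v_2$. The $-1$ of the $v_2$-line at $v_1$ is flanked, among the nonzeros of that line, by two lines $c_L,c_R$ bearing $+1$, which lie in distinct branches at $v_2$; by maximality of the path every branch at $v_2$ except the one continuing it has depth $\le 2$ while the $v_1$-branch has depth exactly $2$, so the second-largest branch-depth at $v_2$ is $2$, whence one of $c_L,c_R$ — whichever does not meet the continuing branch — lies in a branch of depth $\le 2$. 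Such a branch consists only of $c_\bullet$ and its neighbours; were $c_\bullet$ non-pendant it would be a degree-$\ge 3$ line all of whose neighbours but $v_2$ are pendants, forcing its unique $-1$ into the $v_2$-line and contradicting the $+1$ there. So $c_\bullet$ is a pendant, and $\ell:=c_\bullet$.

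In the unicyclic case one must \emph{not} use a global longest path: in a unicyclic graph such a path may ``cut a chord'' at the cycle, giving $v_1$ a second non-pendant neighbour, so the clean picture breaks. Instead, for each cycle vertex $x$ let $T_x$ be the tree hanging off $x$. If some $T_x$ has depth $\ge 3$, run the argument inside $T_x$: take a deepest vertex $q$ of $T_x$, let $w$ be its parent and $v$ its grandparent (inside $T_x$, or $v=x$); all child-subtrees of $v$ then have depth $\le 2$, and the flanking argument again produces $\ell$. If every $T_x$ has depth $\le 2$, a short analysis of the pair of cycle-edge signs at each $x$ — using that pendant edges are always $+$, that a $-1$ forces a line of length $\ge 3$, and the alternation and parity of the $x$-line — shows either that $T_x$ is exactly the gadget of type (i), (ii) or (iii) prescribed by those signs, whence $A$ is basic unicyclic, or that $T_x$ is strictly larger; in the latter case the $x$-line has more $+1$-lines than it has cycle edges at $x$, so one of its $-1$'s (at a type-(i) gadget centre $w$) must be flanked by a pendant $\ell$, and $v:=x$ works. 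The genuinely delicate point throughout is this flanking-pendant claim and the branch-depth bookkeeping behind it; the rest is routine case-checking.
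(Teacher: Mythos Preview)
Your proof is correct and follows essentially the same strategy as the paper: both reduce via induction by locating and detaching a copy of $D_3$, using the observation that a penultimate vertex $w$ of maximal depth must have degree $3$ with its edge to the grandparent $v$ negative, and that some $+1$ adjacent to this $-1$ in the $v$-line is a pendant. The only differences are cosmetic---you use a longest path where the paper uses BFS levels from a pendant root---and that you spell out the unicyclic case (your deep/shallow $T_x$ split and the flanking analysis) where the paper simply writes ``using an argument similar to that used in the odd case.''
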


\begin{proof}
It follows from  our preceding discussion that the minima are as given in the theorem, and the ASMs as described in the theorem attain the minima. We need to show that an ASM satisfying one of the minima can be obtained as described in the theorem.

First consider the case of $n$ odd. We may assume that $n\ge 3$. Let $A$ be an $n\times n$ ASM where $BG(A)$ is a tree $T$. Let $u$ be any pendent vertex of $T$ and root $T$ at $u$; the unique edge at $u$ is positive. Let $d$ be the largest distance of a vertex to $u$.  The case $d=1$ is trivial, and the case $d=2$ is not possible. Hence we may assume that $d\ge 3$.  The vertices of $T$ are partitioned into sets $V_0,V_1,\ldots,V_d$ where $V_i$ consists of all vertices at distance $i$ to $u$ $(0\le i\le d)$, and the only edges of $T$ join a vertex in $V_i$ to a vertex in $V_{i+1}$ for some $i$ with  $0\le i\le d-1$. For each $i$ with $1\le i\le d$ and each $v\in V_i$, there is a unique vertex $w\in V_{i-1}$ which is joined by an edge to $v$. 

Let $p$ be any vertex in $V_d$. Then $p$ is a pendent vertex which is joined by a (positive) edge to a unique vertex $q\in V_{d-1}$. 
The vertex $q$ must have  degree 3, and hence there exist vertices $r\in V_{d-2}$ and  $s\in V_d$ with a negative edge joining $q$ to $r$ and a positive edge joining $q$ to $s$ . The vertex $r$ must be incident with at least two positive edges, and so there exists another vertex $t\in V_{d-1}$ such that there is a positive edge joining $r$ and $t$. Since this edge is positive and $t\in V_{d-1}$, $t$ must be a pendent vertex. It follows that if $A'$ is the $(n-2)\times (n-2)$ submatrix of $A$ obtained by deleting the rows or columns corresponding to vertices $p,q,s,t$, then  $A$ is obtained from $A'$ by attaching a $D_3$. We need to know that we can choose $t$ and $q$ so that $A'$ is also an ASM, that is, so that the row or column corresponding to vertex $r$ is alternating in sign; there may be many positive and negative edges at vertex $r$. Since there is only one edge going from $r$ to a vertex in $V_{d-3}$ (one sign in the row or column corresponding to $r$), there must be such a consecutive pair.

Now assume that $n\ge 4$ is even, and that $A$ is an $n\times n$ connected ASM with $2n$ nonzero entries. Then $BG(A)$ contains a cycle of even length, and hence a basic unicyclic ASM $U$. If $A=U$ we are done. Otherwise, there exists a tree rooted at at least one of the vertices of $BG(U)$.  Now, using an argument similar to that used in the odd case, we complete the proof.
\end{proof}

 \section{Term Rank of ASMs}

The {\it term rank} $\rho(B)$ of  a matrix $B$ is the maximum number  of nonzeros of $B$ with no two from the same row or column. The term rank of $B$ also equals the smallest number of rows and columns that contain all the nonzeros of $B$ (see e.g. \cite{BR}). An  ASM $A$ and its pattern $\widetilde{A}$ have the same term rank: $\rho(A)=\rho(\widetilde{A})$. 
As already remarked, the term rank of an $n\times n$  ASM 
may equal $n$. In this section we obtain a lower bound for the term rank of an ASM and then provide a construction to show that it is the  best possible.

\begin{theorem}\label{th:termrank}
Let $A$ be an $n\times n$ ASM. Then
\begin{equation}\label{eq:termrank}
\rho(A)\ge\left\lceil 2\sqrt{n+1}-2\right\rceil.\end{equation}
\end{theorem}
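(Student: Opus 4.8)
The plan is to use the König–Egerváry characterization of term rank: $\rho(A)$ equals the minimum number of lines (rows and columns) of $\widetilde A$ that cover all nonzeros. So suppose $\rho(A)=t$, and fix a set of $t$ lines covering every nonzero of $A$; say these consist of $a$ rows and $b$ columns with $a+b=t$. The $n-a$ uncovered rows and $n-b$ uncovered columns meet in an $(n-a)\times(n-b)$ all-zero submatrix of $A$. The strategy is to bound $n$ from above in terms of $t$ by exploiting the structure of ASMs on this large zero block, and then invert the inequality to get $t\ge \lceil 2\sqrt{n+1}-2\rceil$.

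First I would show the key estimate: an $n\times n$ ASM cannot have a large all-zero submatrix. Concretely, if $A$ has a $p\times q$ all-zero submatrix, I claim $p+q\le $ something like $n-1+\sqrt{\,}$-type bound — more usefully, that $pq$ is at most roughly $(n/2)^2$, or even better that the zero block forces $p+q\le n$ with additional slack. The cleanest route: restrict attention to the $(n-a)$ uncovered rows. Within these rows, all nonzeros lie in the $b$ covered columns. By property (iv), each such row has row sum $1$, hence at least one $+1$, so each uncovered row has a nonzero in one of the $b$ covered columns; that only gives $b\ge 1$, too weak. Instead I would use property (i)/(iv) more carefully together with the alternating structure: consider the partial-sum argument from the proof of Lemma~\ref{lem:pat1}. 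In the leading $i\times n$ submatrix, exactly $i$ of the column sums equal $1$; dually for trailing submatrices and for columns. The point is that the $+1$s in an ASM are forced to "spread out," so a wide band of consecutive zero rows is expensive. I expect the right packing statement to be: if rows $i+1,\dots,i+p$ are all covered-by-columns (i.e. their nonzeros sit in a fixed set of $b$ columns), then those $b$ columns must absorb all the $+1$s and $-1$s of $p$ consecutive rows, and by the alternation/partial-sum constraints this needs $b\ge$ (something growing with $p$). Symmetrically for the $a$ rows versus $n-b$ columns. Balancing $a+b=t$ against the two constraints should yield $n\lesssim \binom{t/2+1}{2}$-type bound, i.e. $n+1\le (t/2+1)^2$, which rearranges exactly to $t\ge 2\sqrt{n+1}-2$, and then one takes the ceiling since $t$ is an integer.

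The main obstacle is getting the sharp packing lemma — the precise statement that $p$ consecutive all-zero rows in an $n\times n$ ASM force at least $p$ (or the right function of $p$) columns of "high activity," and then combining the row-side and column-side bounds without losing constant factors, so that the final inequality is $2\sqrt{n+1}-2$ on the nose rather than something weaker like $\sqrt n$. I would handle this by looking at the all-zero $(n-a)\times(n-b)$ block directly: its complement (the $a$ rows and $b$ columns) must, by property (iv) applied to each of the $n$ rows and $n$ columns, carry every unit of row/column sum, and a counting argument on the $+1$/$-1$ bookkeeping across the block boundary should give $(a+1)(b+1)\ge n+1$ or similar; then $t=a+b\ge 2\sqrt{(a+1)(b+1)}-2\ge 2\sqrt{n+1}-2$ by AM–GM, and the ceiling is free. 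Verifying that inequality $(a+1)(b+1)\ge n+1$ — equivalently that an ASM with an all-zero $p\times q$ block satisfies $(n-p)(n-q)\ge$ nothing, rather $(n-p+1)(n-q+1)\le$ wait — the exact form of the block inequality is where the real work lies, and I would pin it down by induction on $n$ using the elementary ASM expansion / deletion operations, peeling off the extremal rows and columns (first and last, which have a single $+1$) to reduce the block size in a controlled way.
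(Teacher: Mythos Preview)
Your overall framework is exactly the paper's: use K\"onig--Eg\'erv\'ary to get a cover by $a$ rows and $b$ columns with $a+b=t$, look at the resulting $a\times b$ block $A_1$ (after permuting covered rows to the top and covered columns to the left, preserving relative order), and aim for the inequality $(a+1)(b+1)\ge n+1$, equivalently $n-t\le ab$, then finish with AM--GM. That target inequality and the AM--GM step are both correct and sharp.

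The gap is in how you propose to prove $n-t\le ab$. You hover near the right idea (``counting argument on the $+1$/$-1$ bookkeeping across the block boundary'') but then retreat to an inductive peeling argument via ASM deletion. That inductive route is not needed and would be awkward to make work: removing the first and last rows/columns of an ASM does not in general leave an ASM, and controlling how the zero block shrinks under such operations is messy. The paper's argument is a four-line direct count that you should use instead. Write
\[
A'=\left[\begin{array}{cc} A_1 & A_2\\ A_3 & O\end{array}\right],
\]
with $A_1$ of size $a\times b$. The $n-b$ uncovered columns have all their nonzeros in $A_2$, and since relative row order was preserved, each such column still alternates and sums to $1$; hence the total entry-sum of $A_2$ is $n-b$. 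Symmetrically the entry-sum of $A_3$ is $n-a$. On the other hand the $a$ rows of $[A_1\ A_2]$ each sum to $1$ (row sums are permutation-invariant), giving total $a$; subtracting, the entry-sum of $A_1$ is $a-(n-b)=t-n$. Thus $A_1$ has exactly $n-t$ more $-1$s than $+1$s, and since $A_1$ has only $ab$ entries, $n-t\le ab\le (t/2)^2$. That is the whole content of the ``block inequality''; no induction, no partial-sum lemma, no Lemma~\ref{lem:pat1} machinery is required.
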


\begin{proof}
Let the term rank of $A$ be $t$. Then there exist $e$ rows and $f$ columns of $A$ with $e+f=t$ that contain all the $\pm 1$s  of $A$. We permute the rows and columns of $A$ bringing these $e$ rows to the top and bringing
these $f$ columns to the left, otherwise respecting the order of the rows and columns. The resulting matrix $A'$ has the form
\[\left[\begin{array}{cc}
A_1&A_2\\
A_3&O_{n-e,n-f}\end{array}\right]\]
where $A_1$ is an $e\times f$ matrix. The columns of $A_2$ and the rows of $A_3$ satisfy the alternating sign  property of ASMs. Thus  the sum of the entries of $A_2$ equals $n-f$ and the sum of the entries of $A_3$ equals  $n-e$.  Let $p_i$ and $n_i$ denote, respectively,  the number of $+1$s and number of $-1$s in $A_i$ for $i=1,2,3$.  Then, using the alternating sign  property, we see that the following relations hold:
\begin{equation}\label{eq:stuff}
p_2-n_2=n-f, p_3-n_3=n-e, (p_1+p_2)-(n_1+n_2)=e,
(p_2+p_3)-(n_1+n_3)=f.\end{equation}
The relations in (\ref{eq:stuff}) imply that
\begin{equation}\label{eq:morestuff}
n_1-p_1=n-(e+f)=n-t,\end{equation}
that is, $A_1$ has $n-t$ more $-1$s than $+1$s.  Since $A_1$ is an $e\times f$ matrix and since $e+f=t$, we obtain
\begin{equation}\label{eq:evenmorestuff}
n_1-p_1=n-t\le ef\le \left(\frac{t}{2}\right)^2.\end{equation}
Manipulating (\ref{eq:evenmorestuff}),  we get
\begin{eqnarray*}
n-t\le \frac{t^2}{4}\\
4n+4\le t^2+4t+4=(t+2)^2\\
2\sqrt{n+1}\le t+2\\
2\sqrt{n+1}-2\le t,\\
\end{eqnarray*}
and (\ref{eq:termrank}) follows.
\end{proof}

We now show by construction that for each $n\ge 1$ there exists an $n\times n$ ASM for which equality holds in (\ref{eq:termrank}). 
We consider the  two cases:
\begin{equation}
\label{eq:newbie1} 2k \mbox{ if } k(k+1)<n+1\le (k+1)^2\mbox{ for some integer $k$},\end{equation}
and 
\begin{equation}\label{eq:newbie2}
2k+1\mbox{ if } (k+1)^2<n+1\le (k+1)(k+2)\mbox{ for some integer $k$}.\end{equation}

First consider the case given by (\ref{eq:newbie1}), and let $B$ be any 
$k\times k$ $(0,-1)$-matrix with exactly $q=n-2k$\; $-1$s. 
Applying an elementary ASM expansion to $B$ we obtain an ASM $A$ of order $n$ all of whose nonzero entries are contained in the $k$ rows and $k$ columns of its submatrix $B$. Hence $\rho(A)\le 2k$. Since by (\ref{eq:newbie1}), $2k$ is a lower bound  for the term rank,  we have $\rho(A)=2k$ as desired.

Now consider the case given by (\ref{eq:newbie2}), and let $B$ be any $k\times (k+1)$ $(0,-1)$-matrix with $q=n-(2k+1)$\; $-1$s. 
Again, applying an elementary ASM expansion to $B$ we obtain an ASM $A$ of order $n$ all of whose nonzero entries are contained in the $k$ rows and $k+1$ columns of its submatrix $B$. Hence $\rho(A)\le 2k+1$. Since by (\ref{eq:newbie2}), $2k+1$ is a lower bound  for the term rank,  we have $\rho(A)=2k+1$ as desired.

This completes the construction that shows that equality can be obtained in Theorem \ref{th:termrank} for all $n$.
As an example of this construction, let $k=3$ and  $n=14$ so that we are  in the case given by (\ref{eq:newbie1}). Let $B$ be the $3\times 3$ $(0,-1)$-matrix with $q=n-2k=14-6=8$\; $-1$s whose only zero is in its lower right corner. Then a matrix produced by an elementary expansion is
\begin{equation}\label{eq:ex15}
\left[\begin{array}{c|c|c|c|c|c|c|c|c|c|c|c|c|c}
&&&+&&&&&&&&&&\\ \hline
&&&&&&&+&&&&&&\\ \hline
&&&&&&&&&&&+&&\\ \hline
+&&&\cellcolor[gray]{0.8} -&+&&&\cellcolor[gray]{0.8} -&+&&&\cellcolor[gray]{0.8}
-&+&\\ \hline
&&&+&&&&&&&&&&\\ \hline
&&&&&&&+&&&&&&\\ \hline
&&&&&&&&&&&+&&\\ \hline
&+&&\cellcolor[gray]{0.8} -&&+&&\cellcolor[gray]{0.8} -&&+&&\cellcolor[gray]{0.8} -&&+\\ \hline
&&&+&&&&&&&&&&\\ \hline
&&&&&&&+&&&&&&\\ \hline
&&&&&&&&&&&+&&\\ \hline
&&+&\cellcolor[gray]{0.8} -&&&+&\cellcolor[gray]{0.8} -&&&+&\cellcolor[gray]{0.8} 0&&\\ \hline
&&&+&&&&&&&&&&\\ \hline
&&&&&&&+&&&&&&\end{array}\right].\end{equation}
The nonzeros can be covered with the 3 rows and 3 columns containing the $-1$s.

\begin{corollary}\label{cor:termequal}
The minimum term rank of an $n\times n$ ASM equals
\[\left\lceil 2\sqrt{n+1}-2\right\rceil.\]
\end{corollary}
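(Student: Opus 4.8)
The plan is simply to bolt together the two ingredients already developed in this section. Theorem~\ref{th:termrank} shows that every $n\times n$ ASM $A$ satisfies $\rho(A)\ge\lceil 2\sqrt{n+1}-2\rceil$, so all that remains for the corollary is to confirm that this lower bound is attained for every $n\ge 1$. The construction following Theorem~\ref{th:termrank} already produces, via an elementary ASM expansion of a suitable small $(0,-1)$-matrix, an $n\times n$ ASM whose nonzeros lie in $2k$ lines (in the range \eqref{eq:newbie1}) or in $2k+1$ lines (in the range \eqref{eq:newbie2}). So the only genuine content left is the bookkeeping check that $2k$ and $2k+1$ really are the values $\lceil 2\sqrt{n+1}-2\rceil$ on those respective ranges, and that the two ranges together account for all $n\ge 1$.

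First I would note that the intervals $\bigl(k(k+1),(k+1)^2\bigr]$ and $\bigl((k+1)^2,(k+1)(k+2)\bigr]$, as $k$ runs over the nonnegative integers, partition $(0,\infty)$; since $n+1\ge 2$, exactly one of \eqref{eq:newbie1} and \eqref{eq:newbie2} applies to any given $n$. Next, in case \eqref{eq:newbie1} I would verify the two-sided estimate $2k-1<2\sqrt{n+1}-2\le 2k$: the right inequality is immediate from $n+1\le(k+1)^2$, and the left one follows because $n+1$ is an integer exceeding $k(k+1)$, hence $n+1\ge k^2+k+1>(k+\tfrac12)^2$. This pins down $\lceil 2\sqrt{n+1}-2\rceil=2k$. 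Symmetrically, in case \eqref{eq:newbie2} the bounds $n+1>(k+1)^2$ and $n+1\le(k+1)(k+2)\le(k+\tfrac32)^2$ give $2k<2\sqrt{n+1}-2\le 2k+1$, so $\lceil 2\sqrt{n+1}-2\rceil=2k+1$. Combining these identifications with the construction (which realizes exactly these term ranks) and with Theorem~\ref{th:termrank} (which says they cannot be lowered) yields the corollary.

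I do not anticipate a serious obstacle: the corollary is essentially a repackaging of Theorem~\ref{th:termrank} together with the explicit construction. The only place to be careful is the interaction of the ceiling function with the strict versus non-strict inequalities defining the two cases — one must make sure the endpoint $n+1=(k+1)^2$ is assigned to the correct case and that the rounding comes out right there — and, secondarily, that the construction's existence requirements hold on the stated ranges (for \eqref{eq:newbie1}, that a $k\times k$ $(0,-1)$-matrix with $n-2k$ entries equal to $-1$ exists, i.e.\ $0\le n-2k\le k^2$; and analogously $0\le n-(2k+1)\le k(k+1)$ for \eqref{eq:newbie2}). Both of these reduce to the same elementary inequalities used above, so the argument closes cleanly.
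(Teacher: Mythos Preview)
Your proposal is correct and follows the paper's approach exactly: the corollary is a repackaging of Theorem~\ref{th:termrank} plus the construction, and you have made explicit the bookkeeping (matching $\lceil 2\sqrt{n+1}-2\rceil$ to $2k$ or $2k+1$ on the two ranges, checking the ranges cover all $n\ge 1$, and verifying the feasibility bounds $0\le n-2k\le k^2$ and $0\le n-(2k+1)\le k(k+1)$) that the paper leaves implicit.
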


\section{Symmetric ASMs}

The $n\times n$ diamond ASM $D_n$  is a symmetric matrix. As noted in Section 1,
we can view an $n\times n$  symmetric ASM $A=[a_{ij}]$ as the  adjacency matrix
of a signed  loopy graph (an ASLG) $G(A)$ with linearly ordered vertices $1,2,\ldots,n$ with a positive
(respectively, negative) edge between vertices $i$ and $j$ if and only if  $a_{ij}=+1$ (respectively, $a_{ij}=-1$).  We denote a signed  edge $\{i,j\}$ by $\{i,j\}^+$ and $\{i,j\}^-$, respectively.  The defining property of an ASLG is:  If $i$ is any vertex and $\{j_1,j_2,\ldots,j_k\}$ are the vertices joined to $i$ by an edge ($k$ is necessarily odd) where $1\le j_1< j_2<\cdots<j_k\le n$, then  we have $\{i,j_1,\}^+, \{i,j_3\}^+,\ldots, \{i,j_k\}^+$ and $\{i,j_2\}^-,\{i,j_4\}^-,\ldots.\{i,j_{k-1}\}^-$. 
We observe that if $A$ is an  $n\times n$ ASM with only zeros on the main diagonal, then $n$ must be even. This is because the degrees of the corresponding loop-free graph $G(A)$ are all odd, and so $G(A)$ has an even number of vertices.
 Another  example of an ASLG with loops is given by the ASM
\[\left[\begin{array}{c|c|c|c|c|c|c}
&&&+&&&\\ \hline
&+&&-&+&&\\ \hline
&&+&&-&+&\\ \hline
+&-&&+&&-&+\\ \hline
&+&-&&+&&\\ \hline
&&+&-&&+&\\ \hline
&&&+&&&\end{array}\right].\]
Its signed loopy graph is a 5-cycle with vertices $2,3,4,5,6$ and  signed edges $\{2,4\}^-$, $\{4,6\}^-$, $\{6,3\}^+$,  $\{3,5\}^-$,  $\{5,2\}^+$, $\{3,4\}^-$, $\{4,1\}^+$, and $\{4,7\}^+$, and  positive loops at each of the vertices $2$, $3$, $4$, $5$, and $6$.

The elementary expansion construction used  in Section 2 to show that every $(0,+1,-1)$-matrix is a submatrix of some ASM
can be used in a symmetrical way to show that every symmetric $(0,1,-1)$-matrix  $B$ is a submatrix of a symmetric ASM $A$. If there are all $0$s on the main diagonal of $B$, the construction can be carried out so that $A$ has all $0$s on its main diagonal. 
Thus every signed (loopy) graph is an induced subgraph of an alternating signed (loopy) graph.


We now turn our attention to determining the largest number of nonzeros in an $n\times n$ symmetric ASM with  only zeros on its main diagonal, that is,
the maximum number of edges in an ASG.
Let $\sigma(A)$ equal the number of nonzero entries of $A$, and let   $\sigma^*(A)$  equal the number of nonzeros above the main diagonal of $A$. For a symmetric ASM with a zero main diagonal we have $\sigma(A)=2\sigma^*(A)$.

For $n$ a positive integer, let
\[\alpha_n=\max\{\sigma({A}): A \mbox{ an $n\times n$ symmetric ASM}\}\]
and, for $n$ a positive even integer,  let
\[\beta_n=\max\{\sigma({A}): A \mbox{ an $n\times n$ symmetric ASM with a zero main diagonal}\}.\]
Since $D_n$ is a symmetric ASM, 
\[\alpha_n=\left\{\begin{array}{cl}
\frac{n^2+1}{2}&\mbox{ if $n$ is odd}\\
\frac{n^2}{2}&\mbox{ if $n$ is even.}\end{array}\right. \]

We first suppose that $n$ is a multiple of  4.  In this case the  diamond ASM $D_n$ with $n=4k$  is a symmetric matrix  whose main diagonal consists of $k$ consecutive 0s, followed by  
$2k$ consecutive $+1$s, followed by $k$ consecutive 0s. Taking these $2k$ $+1$s and every other  pair of $-1$s on the superdiagonal and subdiagonal, we get $k$ disjoint principal $2\times 2$ submatrices of the form
\[\left[\begin{array}{cc}
+&-\\-&+\end{array}\right].\]
Replacing each of these $2\times 2$ submatrices of $D_n$ with a $2\times 2$ zero matrix we get a symmetric ASM $D_n^*$ with a zero main diagonal, and we call this ASM a {\it type $1$ hollowed-diamond ASM} of order $n=4k$.

For instance, for $k=2$ we have 
\[
D_8=\left[\begin{array}{c|c||c|c|c|c||c|c}
&&&&+&&& \\ \hline
&&&+&-&+&& \\ \hline\hline
&&+&-&+&-&+& \\ \hline
&+&-&+&-&+&-&+ \\ \hline
+&-&+&-&+&-&+& \\ \hline
&+&-&+&-&+&& \\ \hline\hline
&&+&-&+&&& \\ \hline
&&&+&&&& \end{array}\right]\rightarrow
\left[\begin{array}{c|c||c|c|c|c||c|c}
&&&&+&&& \\ \hline
&&&  +&-&+&& \\ \hline\hline
&&0&0&+&-&+& \\ \hline
&+&0&0&-&+&-&+ \\ \hline
+&-&+&-&0&0&+& \\ \hline
&+&-&+&0&0&& \\ \hline\hline
&&+&-&+&&& \\ \hline
&&&+&&&& \end{array}\right]=D_8^*.
\]
By taking the rows of the diamond ASM in the reverse order and replacing $k$ disjoint principal submatrices of the form
\begin{equation}\label{eq:anew}
\left[\begin{array}{cc}
-&+\\+&-\end{array}\right]\end{equation}
by $2\times 2$ zero matrices, we get a different ASM, which we  call a {\it type $2$ hollowed-diamond ASM} of order $4k$ and also designate as $D_n^*$. For $k=2$, we get the matrices

\[D_8=\left[\begin{array}{c|c||c|c|c|c||c|c}
&&&+&&&&\\ \hline
&&+&-&+&&&\\ \hline\hline
&+&-&+&-&+&&\\ \hline
+&-&+&-&+&-&+&\\ \hline
&+&-&+&-&+&-&+\\ \hline
&&+&-&+&-&+&\\ \hline\hline
&&&+&-&+&&\\ \hline
&&&&+&&&\end{array}\right]\rightarrow
\left[\begin{array}{c|c||c|c|c|c||c|c}
&&&+&&&&\\ \hline
&&+&-&+&&&\\ \hline\hline
&+&0&0&-&+&&\\ \hline
+&-&0&0&+&-&+&\\ \hline
&+&-&+&0&0&-&+\\ \hline
&&+&-&0&0&+&\\ \hline\hline
&&&+&-&+&&\\ \hline
&&&&+&&&\end{array}\right]=D_8^*.\]

\smallskip\noindent
Notice that the type 1 and type 2  hollowed-diamond ASMs are not equivalent under the action of the dihedral group of order 8. We refer to both of them as hollowed-diamond ASMs.

The number of nonzero entries of a  hollowed-diamond ASM of order $n=4k$ (of either type) is
\[\sigma({D_n^*})=\sigma({D_n})-4k=2(2k)^2-4k=8k^2-4k=\frac{n^2-2n}{2}.\]
Thus we have an ASG without loops of order $n=4k$  with $\frac{n^2-2n}{4}=4k^2-2k$ edges. There is another $n\times n$ symmetric ASM with zero diagonal with the same number of nonzero entries.
It is obtained from an ASM $E_n$ of order $n=4k$ with four fewer nonzero entries than $D_{4k}$  by replacing $k-1$ disjoint $2\times 2$ principal submatrices of  the form (\ref{eq:anew}) with zero matrices to get a symmetric ASM $E_n^*$ with zero diagonal. We call $E_n$ a {\it near-diamond ASM} and $E_n^*$ a {\it hollowed-near-diamond ASM}.  We illustrate this construction, again for $n=8$.
\[E_8=\left[\begin{array}{c|c||c|c|c|c||c|c}
&&&&&+&&\\ \hline
&&&&+&-&+&\\ \hline\hline
&&&+&-&+&-&+\\ \hline
&&+&-&+&-&+&\\ \hline
&+&-&+&-&+&&\\ \hline
+&-&+&-&+&&&\\ \hline\hline
&+&-&+&&&&\\ \hline
&&+&&&&&\end{array}\right]
\rightarrow
\left[\begin{array}{c|c||c|c|c|c||c|c}
&&&&&+&&\\ \hline
&&&&+&-&+&\\ \hline\hline
&&&+&-&+&-&+\\ \hline
&&+&0&0&-&+&\\ \hline
&+&-&0&0&+&&\\ \hline
+&-&+&-&+&&&\\ \hline\hline
&+&-&+&&&&\\ \hline
&&+&&&&&\end{array}\right]=E_8^*.
\]

\begin{theorem}\label{th:maxsym}
Let $n\equiv 0  \mbox{ mod $4$}$.
Then $\beta_n=\frac{n^2-2n}{2}$. Equivalently, the maximum number of edges in an ASG of order $n$ without loops is $\frac{n^2-2n}{4}$.
Moreover, a symmetric  ASM of order $n$ with zero diagonal has
$ \frac{n^2-2n}{2}$ nonzero entries if and only if it is a hollowed-diamond ASM of type $1$ or $2$, or a hollowed-near-diamond ASM.
\end{theorem}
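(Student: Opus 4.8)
The lower bound $\beta_n\ge\frac{n^2-2n}{2}$ is already in hand: for $n=4k$ each of the three families exhibited above --- the hollowed--diamond ASMs of type~$1$ and type~$2$, and the hollowed--near--diamond ASMs --- is a symmetric ASM with zero main diagonal having exactly $\frac{n^2-2n}{2}$ nonzero entries. So the real work is the reverse inequality together with the classification of extremal matrices. Let $A=[a_{ij}]$ be a symmetric $n\times n$ ASM, $n=4k$, with $a_{ii}=0$ for all $i$, and let $R=(r_1,\dots,r_n)$ be the row sum vector of its pattern. By Lemma~\ref{lem:pat1}, $\mathbf{j}_n\le R\le\mathbf{k}_n$, so $\sigma(A)=\sum_i r_i\le\sum_i (\mathbf{k}_n)_i=\frac{n^2}{2}=\alpha_n$, with equality only for a diamond ASM, which has a nonzero diagonal. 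Thus everything reduces to showing that the hypothesis $a_{ii}=0$ forces the total deficiency $\sum_i\big((\mathbf{k}_n)_i-r_i\big)$ to be at least $n$; and since the $180^\circ$ rotation $a_{ij}\mapsto a_{n+1-i,\,n+1-j}$ preserves symmetric zero--diagonal ASMs and sends $r_i$ to $r_{n+1-i}$, it is enough to prove the ``half'' bound
\[\sum_{i=1}^{n/2}\big((\mathbf{k}_n)_i-r_i\big)\ \ge\ \frac{n}{2},\]
equivalently, that the first $n/2$ rows of $A$ carry at most $\frac{n^2}{4}-\frac{n}{2}$ nonzeros.

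To prove this, I would track the $(0,1)$--vectors $v_0,v_1,\dots,v_n$, where $(v_i)_j$ is the sum of the first $i$ entries of column~$j$; thus $v_0=\mathbf{0}$, $v_n=\mathbf{1}$, $v_i$ has exactly $i$ ones, $r_i$ equals the Hamming distance of $v_{i-1}$ and $v_i$, and the $+1$'s (resp.\ $-1$'s) of row~$i$ occupy the coordinates that flip $0\to 1$ (resp.\ $1\to 0$) at step~$i$. Writing $c_{ij}=\sum_{p\le i,\,q\le j}a_{pq}$ for the corner--sum matrix, one checks using the symmetry of $A$ that $a_{ii}=0$ is equivalent to $c_{ii}-c_{i-1,i-1}\in\{0,2\}$, so the diagonal $d_i=c_{ii}$ climbs from $0$ to $n$ in exactly $n/2$ ``double steps''; a double step at $i$ occurs precisely when column~$i$ already carries an odd number of nonzeros in rows $1,\dots,i-1$ while $a_{ii}=0$ forbids correcting this at row~$i$. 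The crux is a bookkeeping lemma asserting that these suppressed corrections cost at least $\frac{n}{2}$ units of row deficiency among the first $n/2$ rows. The mechanism I would exploit: a row $i$ with $r_i=(\mathbf{k}_n)_i$ has, by symmetry, column sum $s_i=r_i=(\mathbf{k}_n)_i$, so column~$i$ meets the maximal allowed number of rows; when $i$ is in the central range this forces that set of rows to contain $i$, forces the signs along column~$i$, and then, propagating the alternation condition into the neighbouring columns, produces a contradiction with $a_{ii}=0$. One then charges each such obstruction to a nearby deficient row and checks that the charges are disjoint. Making this charging precise, and verifying it is tight, is the main obstacle of the proof.

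For the characterization, suppose $\sigma(A)=\frac{n^2-2n}{2}$, so that every inequality above is an equality and the deficiency, being exactly $n/2$ in each half, is distributed as sparsely as the obstructions permit. Replaying the bookkeeping in the equality case first pins down the possible row sum vectors $R$: they are exactly the vectors obtained from $\mathbf{k}_n$ by subtracting $2$ from a consecutive pair $(\mathbf{k}_n)_t,(\mathbf{k}_n)_{t+1}$ for each hollowed $2\times2$ principal block inside the diamond's middle band, in one of a short list of admissible configurations. Once $R$ is known, the inequalities of Section~2 determine $A$ entry by entry: the rows outside the band must agree with those of $D_n$, of the reversed diamond, or of $E_n$, while inside the band the only freedom left is the choice of which disjoint alternating $2\times2$ principal submatrices are replaced by zero blocks --- which is precisely the data of a hollowed--diamond ASM of type~$1$ or type~$2$, or of a hollowed--near--diamond ASM. Carrying this rigidity all the way down to the three named families is the other delicate point.
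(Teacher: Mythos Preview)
Your proposal is not a proof but an outline, and the two places you yourself flag as ``the main obstacle'' and ``the other delicate point'' are exactly where the argument is missing. You assert a bookkeeping lemma---that the forced zeros on the diagonal cost at least $n/2$ units of row deficiency in each half---and sketch a charging scheme, but you never carry it out; the sentence ``one then charges each such obstruction to a nearby deficient row and checks that the charges are disjoint'' is precisely the step that needs to be proved. Likewise, in the equality case you claim the row sum vector $R$ is pinned down to a short list and that the entries of $A$ are then determined, but neither the list nor the entry-by-entry reconstruction is supplied. As written, the proposal establishes only the trivial bound $\sigma(A)\le n^2/2$ coming from Lemma~\ref{lem:pat1}.

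The paper's argument is structurally quite different and avoids any global charging. It partitions $A$ symmetrically into blocks
\[
A=\begin{pmatrix}A_1&X&Y\\ X^t&A_2&Z\\ Y^t&Z^t&A_3\end{pmatrix}
\]
with $A_1,A_3$ of size $k\times k$ and $A_2$ of size $2k\times 2k$, and proves a purely local fact: for each $i$ the $2\times 2$ block $\begin{pmatrix}a_{i-1,i}&a_{i-1,i+1}\\ 0&a_{i,i+1}\end{pmatrix}$ must contain a second zero. Summing this over the diagonal of $A_2$ gives $\sigma^*(A_2)\le 2k^2-2k+1$ directly, with no charging needed. Combining with the elementary bound $1+3+\cdots+(2k-1)=k^2$ on the first $k$ rows yields $\sigma(Y)\le 1$, forces $A_1=A_3=0$, and reduces the classification to two short cases ($\sigma(Y)=0$ or $1$), each handled in a paragraph. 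Your corner-sum observation $c_{ii}-c_{i-1,i-1}\in\{0,2\}$ is correct and pleasant, but it does not by itself replace the local $2\times 2$ lemma, and the rest of your plan never bridges the gap.
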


\begin{proof} Let $n=4k$ and let $A=[a_{ij}]$ be an $n\times n$  symmetric ASM with a zero diagonal. 
We first observe that   a $2\times 2 $ submatrix of $A$ of the form \begin{equation}\label{eq:sub1}
\left[\begin{array}{cc}
a_{i-1,i}&a_{i-1,i+1}\\
a_{ii}=0&a_{i,i+1}\end{array}\right]
\mbox{ contains at least one other zero  ($i=2,3,\ldots,n-1$)}.
\end{equation}
Otherwise, by symmetry,  the submatrix of $A$ determined by rows and columns $i-1,i,i+1$ is of the form
\[\left[\begin{array}{ccc}
0&e&f\\
e&0&g\\
f&g&0\end{array}\right]\]
where $e,f,g\ne 0$.  Thus $f=-e$  and $g=-e$, and so $f=g$, violating the alternating sign property of $A$.

Consider the partition of $A$ given by
\begin{equation}\label{eq:partition}      
A=\left[\begin{array}{ccc}
A_1&X&Y\\
X^t&A_2&Z\\ 
Y^t&Z^t&A_3\end{array}\right]\end{equation}
where $A_1$ and $A_3$ are $k\times k$ symmetric matrices with zero diagonals, and $A_2$ is a $2k\times 2k$ symmetric matrix with a zero diagonal.
Then, it follows from (\ref{eq:sub1}) that
$A_2$ contains at least $k-1$ zeros above its main diagonal, that is, 
\begin{equation}\label{eq:sub2}
\sigma^*(A_2)\le 2k^2-2k+1.
\end{equation}
For the hollowed-diamond ASM $D_{4k}^*$, the submatrices corresponding to $A_1,A_3,\mbox{ and }Y$ are zero matrices,  and
the number of zeros above the main diagonal in its   $2k\times 2k$ submatrix corresponding to $A_2$ is exactly $k$. For the hollowed-near-diamond ASM $E_{4k}$, we have that $A_1$ and $A_3$ are zero matrices, the matrix $Y$ has one nonzero entry, a $+$ in its lower left corner, and  the number of zeros above the main diagonal in $A_2$ is $k-1$.

Now assume that $A$ has the maximum number of nonzeros among all symmetric ASMs of order $n$ with a zero main diagonal. 
Then since   $\sigma(D_{4k}^*)=\sigma(E_{4k}^*)=8k^2-4k$ and $\sigma^*(D_{4k}^*)=\sigma^*(E_{4k}^*)=4k^2-2k$, we have that 
\begin{equation}\label{eq:whynot}
\sigma(A)\ge 8k^2-4k \mbox{ and } \sigma^*(A)\ge 4k^2-2k.
\end{equation}
  The number of nonzeros of $A$ in the first $k$ rows (respectively, columns) is at most $1+3+\cdots+(2k-1)=k^2$, and hence, using (\ref{eq:sub2})  and (\ref{eq:whynot}), we get
\begin{eqnarray*}
4k^2-2k\le \sigma^*(A)&\le& \sigma^*(A_2)+2k^2-\sigma(Y)\\
&\le& 2k^2-2k+1
+2k^2-\sigma (Y)\\
&=&4k^2-2k+1-\sigma(Y).\end{eqnarray*}
From this calculation we see that
\begin{equation}\label{eq:sub3}
0\le \sigma(Y)\le 1,\end{equation}
and
\begin{equation}\label{eq:sub4}
\sigma \left(\left[\begin{array}{ccc}A_1&X&Y\end{array}\right]\right)\ge k^2-1
\mbox{ and }
\sigma \left(\left[\begin{array}{c}Y\\Z\\A_3\end{array}\right]\right)\ge k^2-1.
\end{equation}
Since the number of nonzeros in each row is odd, it follows from (\ref{eq:sub4}) that the numbers of nonzeros in rows $1,2,\ldots,k$ of $A$ are $1,3,\ldots,2k-1$, respectively, and, similarly,
the numbers of nonzeros in the last $k$ columns of $A$ are $2k-1,\ldots,3,1$, respectively. This implies that the first $k$ rows of $A$, after deletion of zero columns,  is a $k\times (2k-1)$ matrix of the form (shown here for $k=5$)
\begin{equation}
\label{eq:specform}
\left[\begin{array}{c|c|c|c|c|c|c|c|c}
&&&&+&&&&\\ \hline
&&&+&-&+&&&\\ \hline
&&+&-&+&-&+&&\\ \hline
&+&-&+&-&+&-&+&\\ \hline
+&-&+&-&+&-&+&-&+\end{array}\right].\end{equation}
If in $A_1$ there were a nonzero, and so a $+1$, then by symmetry there
is an $i$ with $1\le i\le k$, such that there is at least one  $+1$ below the main diagonal in row $i$  of $A_1$  and at least one $+1$ above the main diagonal in column $i$ of  $A_1$, But then, using the above formation for the first $k$ rows of $A$,   there would  be a nonzero in the  diagonal position $(i,i)$ in $A_1$, a contradiction.
Thus $A_1$, and similarly $A_3$,  is a zero matrix. 

By (\ref{eq:sub3}), we have only two cases to consider.

\smallskip\noindent
Case 1: $\sigma(Y)=0$: In this case, since $A_1,A_3,Y,Y^t$ are zero matrices and all row and column sums of $A$ equal 1, the sum of all the entries of $A$ outside of $A_2$ equals $4k$, and hence
the sum of  the entries of $A_2$ is zero.
 Since $A_2$ has a zero main diagonal,  $\sigma^*(A_2)$ is even. Thus equality cannot occur in (\ref{eq:sub2}) and hence $\sigma^*(A_2)\le 2k^2-2k$. Therefore,
\[\sigma^*(A)=\sigma^*(A_2)+2k^2\le 2k^2-2k+2k^2=4k^2-4k\]
as desired. If equality occurs, then $\sigma^*(A_2)=2k^2-2k$ and $A_2$ has exactly $2k$ zeros, with $k$ of them above the main diagonal.  With the structure already determined for $A$, it is now straightforward to verify that $A$ is a hollowed-diamond ASM of type 1 or type 2.

\smallskip\noindent
Case 2: $\sigma(Y)=1$: In this case, 
\[\sigma^*(A)=\sigma^*(A_2)+2k^2-1\le 2k^2-2k+1+2k^2 -1=4k^2-2k\]
as desired. If equality occurs, then $\sigma^*(A_2)=2k^2-2k+1$ and $A_2$ has exactly $2(2k-1)$ zeros, with $k-1$ above the main diagonal. Again, with the structure already determined for $A$, it is straightforward to check that $A$ is a hollowed-near-diamond ASM.
\end{proof}

We now assume that $n\equiv 2$  mod $4$. In this case, the diamond
ASM $D_n$ with $n=4k+2$ is a symmetric matrix whose  main diagonal consists of $k+1$ consecutive 0s, followed by $2k$ consecutive  $-1$s, followed by $k+1$ consecutive $0$s.  Taking these $2k$ consecutive $-1$s and every other pair of $+1$s on the superdiagonal and subdiagonal, we get $k$ disjoint principal $2\times 2$ submatrices of the form
\begin{equation}\label{eq:ohwell}
\left[\begin{array}{cc}
-&+\\ 
+&-\end{array}\right].\end{equation}
Replacing each these $2\times 2$ submatrices of $D_n$ with a $2\times 2$ zero matrix, we get a symmetric ASM $D_n^{\ast}$ with a zero main diagonal. 
 If we take the rows of $D_n$ in the reverse order,  the $-1$s on the main diagonal are replaced with $+1$s, and the $2\times 2$ submatrix (\ref{eq:ohwell}) is also reversed.
Continuing with the terminology used in the case $n=4k$, we call both of these matrices  {\it hollowed-diamond ASMs} of order $4k+2$. For example, with $k=2$ we have
\[D_{10}^*=\left[\begin{array}{c|c|c||c|c|c|c||c|c|c}
&&&&&+&&&&\\ \hline
&&&&+&-&+&&&\\ \hline
&&&+&-&+&-&+&&\\ \hline\hline
&&+&&&-&+&-&+&\\ \hline
&+&-&&&+&-&+&-&+\\ \hline
+&-&+&-&+&&&-&+&\\ \hline
&+&-&+&-&&&+&&\\ \hline\hline
&&+&-&+&-&+&&&\\ \hline
&&&+&-&+&&&&\\ \hline
&&&&+&&&&&\end{array}\right].\]
The number of nonzero entries of the hollowed-diamond ASM of order $n=4k+2$ is
\[\sigma(D_{4k+2}^{\ast})=\sigma(D_{4k+2})-4k=2(2k+1)^2-4k=8k^2+4k+2=\frac{n^2-2n+4}{2}.\]

\begin{theorem}\label{th:maxsym2}
Let $n\equiv 2  \mbox{ mod $4$}$.
Then $\beta_n=\frac{n^2-2n+4}{2}$. Equivalently, the maximum number of edges in an ASG of order $n$ without loops is $\frac{n^2-2n+4}{2}$. Moreover, a symmetric  ASM of order $n$ with zero diagonal has
$ \frac{n^2-2n+4}{2}$ nonzero entries if and only if it is a hollowed-diamond ASM.

\end{theorem}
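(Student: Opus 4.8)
The plan is to mimic the structure of the proof of Theorem~\ref{th:maxsym}, keeping track of the one arithmetic shift caused by writing $n = 4k+2$ instead of $n = 4k$. Write $A = [a_{ij}]$ for an $n\times n$ symmetric ASM with zero diagonal, $n = 4k+2$. As in the $n\equiv 0$ case, the first observation is the local fact \eqref{eq:sub1}: any $2\times 2$ submatrix $\left[\begin{smallmatrix} a_{i-1,i} & a_{i-1,i+1}\\ 0 & a_{i,i+1}\end{smallmatrix}\right]$ must contain a second zero, since otherwise the $3\times 3$ principal submatrix on rows/columns $i-1,i,i+1$ would have the hollow-symmetric form forcing two equal nonzeros in a row, violating the alternating property. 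I would then partition $A$ as in \eqref{eq:partition}, but now with $A_1, A_3$ of size $(k+1)\times(k+1)$ and $A_2$ of size $2k\times 2k$ (all with zero diagonal). The count of forced zeros in $A_2$ coming from \eqref{eq:sub1} is again $k-1$ above the diagonal, so $\sigma^*(A_2) \le 2k^2 - 2k + 1$.

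Next I would set up the extremal bookkeeping. The number of nonzeros of $A$ in its first $k+1$ rows (or columns) is at most $1 + 3 + \cdots + (2k+1) = (k+1)^2$. Using $\sigma^*(D_{4k+2}^{\ast}) = 4k^2 + 2k + 1$ as the lower bound on $\sigma^*(A)$ for an extremal $A$, the same chain of inequalities as in Theorem~\ref{th:maxsym} — namely $4k^2 + 2k + 1 \le \sigma^*(A) \le \sigma^*(A_2) + (k+1)^2 - \sigma(Y) \le (2k^2 - 2k + 1) + (k+1)^2 - \sigma(Y)$ — should pin down $\sigma(Y)$ and force the first $k+1$ rows (and last $k+1$ columns) of $A$, after deleting zero columns, to have exactly $1,3,\ldots,2k+1$ nonzeros, hence to be the diamond-shaped block as in \eqref{eq:specform}. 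The same diagonal-position argument as before then forces $A_1$ and $A_3$ to be zero matrices. The key arithmetic difference from the $n\equiv 0$ case: here $(2k^2 - 2k + 1) + (k+1)^2 = 4k^2 + 1$, and I expect the bound to force $\sigma(Y) = 0$ outright (rather than leaving the two cases $\sigma(Y)\in\{0,1\}$), which is consistent with the theorem asserting a \emph{unique} extremal family. With $A_1 = A_3 = Y = 0$, all row/column sums equal $1$ forces the entry-sum of $A_2$ to be $0$; since $A_2$ has zero diagonal, $\sigma^*(A_2)$ is even, so equality $\sigma^*(A_2) = 2k^2 - 2k + 1$ is impossible and $\sigma^*(A_2) \le 2k^2 - 2k$. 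This yields $\sigma^*(A) \le (2k^2-2k) + (k+1)^2 = 4k^2 + 1 = \tfrac{n^2-2n+4}{4}\cdot\ldots$ — I would double-check that $2\sigma^*(A) = \tfrac{n^2-2n+4}{2}$ and that equality forces $A_2$ to have exactly $2k$ zeros, $k$ of them above the diagonal, placed so that $A$ is a hollowed-diamond ASM.

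For the equality characterization I would argue that once the diamond blocks in rows $1,\dots,k+1$ and columns $n-k,\dots,n$ are fixed and $A_2$ is known to be a $2k\times 2k$ symmetric ASM-interior block with exactly $2k$ zeros ($k$ above the diagonal) and zero entry-sum, the alternating-sign constraints propagate to determine $A_2$ up to the reversal that exchanges the two versions of $D_{4k+2}^{\ast}$; this is the "straightforward to verify" step analogous to the end of Theorem~\ref{th:maxsym}. The main obstacle I anticipate is exactly this rigidity step: showing that the middle block $A_2$, subject to having its diagonal zero, its total sum zero, precisely $k$ superdiagonal zeros, and compatible alternating patterns along the boundary with the already-fixed diamond corners, has \emph{no} freedom beyond the global row-reversal — one must rule out, e.g., placing the $2\times 2$ zero blocks off the superdiagonal/subdiagonal or interleaving them differently. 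I would handle this by tracking partial row sums across $A_2$ (property (iv) of ASMs) row by row, using that each deleted $2\times 2$ block must sit where the pattern $\left[\begin{smallmatrix}+&-\\-&+\end{smallmatrix}\right]$ occurred, and inducting inward from the boundary; if that propagation turns out to be delicate, a fallback is to observe that the constraints already determine all nonzero \emph{positions} and then invoke the uniqueness clause of Lemma~\ref{lem:pat1}-type reasoning on the complementary diamond structure.
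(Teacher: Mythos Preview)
Your proposal has the right skeleton but a genuine error in the central case analysis, and the arithmetic slip is a symptom of it.

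First, the bound $\sigma^*(A)\le \sigma^*(A_2)+(k+1)^2-\sigma(Y)$ is missing a factor of $2$: the $(k+1)^2$ comes from the first $k+1$ \emph{rows}, and a second $(k+1)^2$ comes from the last $k+1$ \emph{columns}, with $Y$ double-counted. The correct chain is
\[
4k^2+2k+1\le \sigma^*(A)\le \sigma^*(A_2)+2(k+1)^2-\sigma(Y)\le 4k^2+2k+3-\sigma(Y),
\]
which yields only $\sigma(Y)\le 2$, not $\sigma(Y)=0$. (Your stated identity $(2k^2-2k+1)+(k+1)^2=4k^2+1$ is also false; it equals $3k^2+2$.)

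Second, and more importantly, your expectation that the extremal matrix has $\sigma(Y)=0$ is wrong. Look at $D_{4k+2}^{\ast}$ itself: its upper-right $(k+1)\times(k+1)$ block contains exactly one $+1$, so the extremal value is $\sigma(Y)=1$. Consequently the parity step you planned (``entry-sum of $A_2$ is $0$, hence $\sigma^*(A_2)$ is even'') does not apply, and the bound $\sigma^*(A)\le 4k^2+1$ you were aiming for is actually \emph{below} the true maximum $4k^2+2k+1$, so it cannot be a valid upper bound.

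The paper handles this by keeping all three possibilities $\sigma(Y)\in\{0,1,2\}$ and eliminating $2$ and $0$ by separate structural arguments: $\sigma(Y)=2$ forces the full diamond block in the first $k+1$ rows, which then pushes at least three nonzeros into $Y$; $\sigma(Y)=0$ forces row $k+1$ and column $3k+2$ to be completely full, again producing the diamond block and a nonzero on the diagonal of $A_1$, a contradiction. Only $\sigma(Y)=1$ survives, and from there the rigidity of $A_2$ is established. So the gap in your plan is precisely this three-way case split and the two elimination arguments; the rest of your outline (partition, local $2\times2$ zero-forcing, diamond shape of the first rows, $A_1=A_3=O$) matches the paper.
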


\begin{proof} The proof is similar to the proof of Theorem \ref{th:maxsym} and we shall be more brief.

We now consider a symmetric ASM $=[a_{ij}]$ of order $n=4k+2$ with a zero diagonal, partitioned as in (\ref{eq:partition}), where $A_1$ and $A_3$ are now $(k+1)\times (k+1)$ matrices and, as before,  $A_2$ is $2k\times 2k$.
As in the proof of Theorem \ref{th:maxsym},
$A_2$ contains at least $k-1$ zeros above its main diagonal, and hence $\sigma^*(A_2)\le 2k^2-2k+1$.
For the hollowed-diamond ASM $D_{4k+2}^*$,   we have
$\sigma(D_{4k+2}^*) =8k^2+4k+2$ and thus  $\sigma^*(D_{4k+2}^*)=4k^2+2k +1$. The submatrix of $D_{4k+2}^*$ corresponding to $A_2$  has  a total of  $4k^2-4k$ nonzero entries, and hence  has above its main diagonal, $2k^2-2k$ nonzero entries  and $k$ zeros.

Assume that $A$ has the maximum number of nonzeros among all 
symmetric ASMs of order $n$  with a zero main diagonal. Then $\sigma^*(A)\ge\sigma^*(D_{4k+2}^*)= 4k^2+2k+1$.
Calculating, as in the proof of Theorem \ref{th:maxsym}, we get
\begin{eqnarray*}
4k^2+2k+1\le \sigma^*(A)&\le& \sigma^*(A_2)+2(k+1)^2-\sigma(Y)\\
&\le &2k^2-2k+1
+2(k+1)^2-\sigma (Y)\\
&=&4k^2+2k+3-\sigma(Y).\end{eqnarray*}
Hence
  \begin{equation}\label{eq:sub5}
0\le \sigma(Y)\le 2.\end{equation}
If $\sigma(Y)=2$, then the first $k$ rows of $A$ have the form shown
in (\ref{eq:specform}) for $k=5$, and all the nonzeros in these rows must be above the main diagonal of $A$. But then $\sigma(Y)\ge 3$, a contradiction. Thus $\sigma(Y)=0\mbox{ or }1$.

Suppose that $\sigma(Y)=0$. Since the last row of $X$ and the first column of $Z$ each contain at most  $2k$ nonzero entries, we now get that 
\begin{equation}\label{eq:similar}
4k^2+2k+1\le \sigma^*(A)=\sigma^*(A_2)+2((k+1)^2-1)\le 4k^2+2k+1.
\end{equation}
This implies that the last row of $X$ and the first column of $Y$ each contain $2k$ nonzero entries.  Thus, since each row of an ASM contains an odd number of nonzero entries,  row $k+1$ and column $3k+2$ of $A$ each contain the maximum number $2k+1$ of nonzero entries, and, for $1\le i\le k+1$,  row $i$ contains the maximum number $2i-1$
nonzero entries and column $3k+1+i$  contains the maximum number $2i-1$ nonzero entries. Thus the first $k+1$ rows of $A$, after deletion of zero columns, is a $(k+1)\times (2k+1)$ matrix of the form in (\ref{eq:specform}). This leads to a contradiction as in the proof of Theorem \ref{th:maxsym}. Thus we must have $\sigma(Y)=1$ (as in $D_{4k+2}^*$).

Now with $\sigma(Y)=1$, as before we conclude that
$A$ has $2i-1$ nonzeros in rows $i$ and columns  $4k+3-i$ for $i=1,2,\ldots,k+1$.
Moreover, the $2i-1$ nonzeros in these rows and columns must be above the main diagonal, as in $D_{4k+2}^*$.  This allows us to conclude that  $A=D_{4k+2}^*$ and $A$ has $\frac{n^2-2n+4}{2}$ nonzero entries where $n=4k+2$. The theorem now follows.
\end{proof}


If we drop the symmetry assumption, it is not difficult to determine the maximum number of nonzero entries in an ASM whose main diagonal contains only zeros.  In fact, this maximum is
\[
{n\choose 2}\mbox{ if $n\equiv 0,3$ mod 4}\]
and
\[{n\choose 2}-1\mbox{ if $n\equiv 1,2$ mod 4}.\]
We illustrate ASMs that achieve these maxima for $n=9, 10, 11, \mbox{ and }12$ from which the general pattern can be discovered.
\[
(n=9)\quad\left[\begin{array}{c|c||c|c|c|c|c||c|c}
0&&&&+&&&&\\ \hline
&0&&+&-&+&&&\\ \hline\hline
&+&0&-&+&-&+&&\\ \hline
+&-&+&0&-&+&-&+&\\ \hline
&+&-&+&0&-&+&-&+\\ \hline
&&+&-&+&0&-&+&\\ \hline
&&\cellcolor[gray]{0.8} 0&+&-&+&0&&\\ \hline\hline
&&&&+&-&+&0&\\ \hline
&&&&&+&&&0\end{array}\right] \mbox{ (35 nonzeros)}\]

\[
(n=10)\quad\left[\begin{array}{c|c||c|c|c|c|c|c||c|c}
0&&&&&+&&&&\\ \hline
&0&&&+&-&+&&&\\ \hline\hline
&&0&+&-&+&-&+&&\\ \hline
&&\cellcolor[gray]{0.8} 0&0&+&-&+&-&+&\\ \hline
&&+&-&0&+&-&+&-&+\\ \hline
&+&-&+&-&0&+&-&+&\\ \hline
+&-&+&-&+&-&0&+&&\\ \hline 
&+&-&+&-&+&\cellcolor[gray]{0.8} 0&0&\\ \hline\hline
&&+&-&+&&&&0&\\ \hline
&&&+&&&&&&0\end{array}\right]\mbox{ (44 nonzeros)}\]

\[
(n=11)\quad\left[\begin{array}{c|c|c||c|c|c|c|c||c|c|c}
0&&&&&&+&&&&\\ \hline
&0&&&&+&-&+&&&\\ \hline
&&0&&+&-&+&-&+&&\\ \hline\hline
&&+&0&-&+&-&+&-&+&\\ \hline
&+&-&+&0&-&+&-&+&-&+\\ \hline
+&-&+&-&+&0&-&+&-&+&\\ \hline
&+&-&+&-&+&0&-&+&&\\ \hline
&&+&-&+&-&+&0&&&\\ \hline\hline
&&&+&-&+&-&+&0&&\\ \hline
&&&&+&-&+&&&0&\\ \hline
&&&&&+&&&&&0\end{array}\right]\mbox{ (55 nonzeros)}\]

\[
(n=12)\quad\left[\begin{array}{c|c|c||c|c|c|c|c|c||c|c|c}
0&&&&&&+&&&&&\\ \hline
&0&&&&+&-&+&&&&\\ \hline
&&0&&+&-&+&-&+&&&\\ \hline\hline
&&+&0&-&+&-&+&-&+&&\\ \hline
&+&-&+&0&-&+&-&+&-&+&\\ \hline
+&-&+&-&+&0&-&+&-&+&-&+\\ \hline
&+&-&+&-&+&0&-&+&-&+&\\ \hline
&&+&-&+&-&+&0&-&+&&\\ \hline
&&&+&-&+&-&+&0&&&\\ \hline\hline
&&&&+&-&+&-&+&0&&\\ \hline
&&&&&+&-&+&&&0&\\ \hline
&&&&&&+&&&&&0\end{array}\right]\mbox{ (66 nonzeros)}\]

\section{Maximal ASMs}

Let $A=[a_{ij}]$ and $B=[b_{ij}]$ be $n\times n$ ASMs. Then $B$ is an {\it  ASM extension} of $A$  provided $A\ne B$ and
\[a_{ij}\ne 0\mbox{ implies } b_{ij}=a_{ij}\quad (i,j=1,2,\ldots,n).\]
In particular, if $B$ is an ASM extension of $A$, then their patterns satisfy
\[\widetilde{A}\le \widetilde{B}\; \mbox{(entrywise) and } \widetilde{A}\ne \widetilde{B}.\]
If $B$ is an ASM extension of $A$, then $E=B-A$ is a $(0,+1,-1)$-matrix with all row and column sums equal to $0$ satisfying $B=A+E$.
We call  an  ASM $A$  {\it maximal} provided it does not have an ASM extension.
The identity matrix $I_n$ and the back-identity matrix $I_n^*$ (corresponding to the permutation $(n,n-1,\ldots,2,1)$) are  maximal ASMs; this can be argued by induction using the fact that an ASM with $+1$s in opposite corners has no other nonzeros in rows and columns $1$ and $n$.  It can also be checked that 
\[\left[\begin{array}{c|c|c|c|c}
&&&+&\\ \hline
+&&&&\\ \hline
&&&&+\\ \hline
&&+&&\\ \hline
&+&&&\end{array}\right]\]
is a maximal ASM.
The permutation matrix
\[\left[\begin{array}{c|c|c|c|c}
&+&&&\\ \hline
&&&+&\\ \hline
+&&&&\\ \hline
&&&&+\\ \hline
&&+&&\end{array}\right]\]
is not a maximal ASM, since
\[\left[\begin{array}{c|c|c|c|c}
&+&&&\\ \hline
&&&+&\\ \hline
+&-&+&&\\ \hline
&+&-&&+\\ \hline
&&+&&\end{array}\right]\]
is an ASM
obtained by replacing the $2\times 2$ zero matrix in rows 3 and 4, and columns 1 and 2 with
\begin{equation}\label{eq:ee}\left[\begin{array}{cc}
-&+\\+&-\end{array}\right].\end{equation}

Generalizing this example, let $P$ be an $n\times n$ permutation matrix corresponding to the permutation $\pi$ of $\{1,2,\ldots,n\}$. 
Let $1\le p<q\le n$ and $1\le k<l\le n$.
Let $T_{p,q;k,l}=[t_{ij}]$ be the $n\times n$ $(0,+1,-1)$-matrix such that
\[t_{ij}=\left\{\begin{array}{rl}
+1&\mbox{ if  $(i,j)=(p,k)$  or $(q,l)$}\\
-1&\mbox{ if  $(i,j)=(p,l)$ or $(q,k)$}\\
0&\mbox{ otherwise.}\end{array}\right. \]
Suppose that the $2\times 2$ submatrix determined by rows $p$ and $q$ and columns $k$ and $l$ of the ASM $P$ is a zero matrix. Then $P+T_{p,q;k,l}$ is an ASM extension of $P$ provided that
\[\pi(p)>l, \pi(q)<k, \pi^{-1}(k)>q,\mbox{ and }\pi^{-1}(l)<p.\]
Pictorially, we have
\[\begin{array}{cccc|c|c|cc}
&&&&(k) &(l)&&\\
&&&&&&&\\
&&&&&\oplus&&\\ 
&&&\phantom{A}&&&\phantom{A}&\\ \hline
(p)&&&&0&0&&\oplus \\ \hline
(q)&\oplus&&&0&0&&\\ \hline
&&&&&&&\\
&&&&\oplus&&&\end{array}\]
where the $0$s are in rows $p$ and $q$ and columns $k$ and $l$, and the $\oplus$s indicate the relative positions of the $+$s of $P$ in rows $p$ and $q$ and columns $k$ and $l$. Replacing $T_{p,q;k,l}$
with $-T_{p.q;k,l}$, we get a similar picture with the permutation $\pi$ satisfying
\[\pi(p)<k, \pi(q)>l, \pi^{-1}(k)<p,\mbox{ and }\pi^{-1}(l)>q.\]
Under these circumstances, we call $P\pm T_{p,q;k,l}$ an {\it elementary ASM extension} of $P$.
 Thus if $B$ is an  elementary ASM extension of $P$, then $B$ has $n+4$ nonzero entries of which exactly two are $-1$.

We now show that if a permutation matrix $P$ has an ASM extension,
then it has an elementary ASM extension.

\begin{theorem}\label{th:elemext}
Let $P$ be an $n\times n$ permutation matrix such that $P$ is not a maximal ASM. Then $P$ has an elementary ASM extension.
\end{theorem}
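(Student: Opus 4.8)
The plan is to start from an arbitrary ASM extension $B=P+E$ of $P$, where $E=B-P$ is a nonzero $(0,+1,-1)$-matrix with all row and column sums equal to $0$, and extract from it a ``smallest'' piece of structure that already forces an elementary ASM extension. Since the row and column sums of $E$ vanish and $E\ne 0$, the support of $E$ contains a cycle in the bipartite sense: there is a closed alternating sequence of rows and columns along which $E$ has entries $\pm1$. The key point is that such a cycle, together with the constraint that $B$ is an ASM and $P$ is a permutation matrix, localizes everything to a single $2\times 2$ pattern of the type $\pm T_{p,q;k,l}$.

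First I would examine the $+1$ of $P$ that lies in each row or column touched by $E$. In a row $i$ where $E$ has a $+1$ in column $j$ and a $-1$ in column $j'$, the entry $b_{ij}=+1$, $b_{ij'}=-1$ (since $b_{ij'}=e_{ij'}=-1$ forces $p_{ij'}=0$, so the $-1$ of $B$ genuinely comes from $E$), and the unique $+1$ of $P$ in that row, at column $\pi(i)$, survives in $B$. The alternating-sign condition along row $i$ of $B$ then pins down the relative order of $j,j',\pi(i)$: a $-1$ must be flanked by $+1$s, so $\pi(i)$ must sit between a pair of opposite-sign entries of $E$ appropriately, and similarly in columns. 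I would then argue that among all ASM extensions of $P$ one can choose $B$ so that $E$ has exactly four nonzero entries forming a single $2\times 2$ block — concretely, take a shortest bipartite cycle in the support of $E$ and show it must have length $4$. If a cycle had length $\ge 6$, one could ``short-circuit'' it: pick two rows $p,q$ and two columns $k,l$ among the cycle's vertices whose $2\times 2$ submatrix in $B$ is forced to be zero in $P$ (using that $P$ is a permutation matrix, at most one of the four positions could be occupied, and the sign pattern of $E$ rules that out), and check that $P\pm T_{p,q;k,l}$ is again an ASM. This reduces to the length-$4$ case.

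Once $E$ is a single $4$-cycle, say with $+1$s at $(p,k),(q,l)$ and $-1$s at $(p,l),(q,k)$ (or the negatives), the four conditions $\pi(p)>l$, $\pi(q)<k$, $\pi^{-1}(k)>q$, $\pi^{-1}(l)<p$ should fall out directly from the alternating-sign requirement applied to rows $p,q$ and columns $k,l$ of $B$: in row $p$ the pattern reading left to right restricted to columns $k,l$ is $+,-$, so the surviving $+1$ of $P$ at column $\pi(p)$ must come after column $l$; symmetrically for the others. That is exactly the definition of an elementary ASM extension, so $B=P\pm T_{p,q;k,l}$ is one.

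The main obstacle I expect is the reduction step: showing that a long cycle in the support of $E$ can always be replaced by a $4$-cycle yielding a genuine \emph{ASM} extension (not merely a $(0,\pm1)$-perturbation with zero line sums). The delicate part is verifying that the chosen $2\times 2$ submatrix of $P$ is the zero matrix and that adding $\pm T_{p,q;k,l}$ does not violate the alternating condition in the four affected lines — one has to use the permutation structure of $P$ to guarantee there is no interfering nonzero of $P$ strictly between the chosen columns in rows $p,q$ or between the chosen rows in columns $k,l$, which is where the ordering hypotheses on $\pi$ are actually produced rather than assumed. Everything else is a routine check of the alternating-sign bookkeeping.
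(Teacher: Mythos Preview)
Your overall strategy---start from $E=B-P$, find some cycle structure in it, and extract a $\pm T_{p,q;k,l}$---is the right one, and your endgame (once $E$ is a single $2\times2$ block, read off the four inequalities on $\pi$ from the alternating condition) is fine. The gap is exactly where you flagged it: the reduction step does not go through as you describe it.

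Specifically, your claim that a shortest bipartite cycle in the support of $E$ must have length $4$ is false. The paper's own Figure~3 furnishes a counterexample: there $E$ has nonzeros exactly at $(3,3),(3,4),(4,3),(4,5),(5,4),(5,5)$, a $6$-cycle in the row--column bipartite graph with no $4$-cycle in its support. So ``take the shortest cycle'' does not hand you a $2\times2$ block, and your fallback (``short-circuit a long cycle by picking some $p,q,k,l$'') is where all the content lies; as written it is an assertion, not an argument. You give no rule for choosing $p,q,k,l$ and no reason the resulting $P\pm T_{p,q;k,l}$ satisfies the ASM condition in all four lines.

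What the paper does instead is more structured. It does \emph{not} look at arbitrary cycles in the support of $E$. Rather, in each row it pairs each $+1$ of $E$ with a specific neighboring $-1$ of $E$, the pairing dictated by which side of the $\oplus$ of $P$ they sit on (and symmetrically in columns). This produces a $2$-regular bipartite graph on the nonzero positions of $E$ whose cycles $\gamma_i$ have the key property that $P+C_i$ is itself an ASM for each $i$; that is why one may assume $E$ is a single such cycle. Then comes a short traversal argument on that cycle: start at a $-1$ in the topmost row meeting $E$, walk along the cycle until the first $-1$ whose horizontal edge goes left, and take the $2\times2$ block spanned by two suitably chosen $-1$'s encountered along the way. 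The positions of the $\oplus$'s relative to this walk are exactly what verifies the four inequalities defining an elementary extension. Note that the two ``off-diagonal'' corners of this $2\times2$ need not lie in the support of $E$ at all---so the paper is not shrinking the cycle to length $4$, it is reading off a $T$ from the geometry of a possibly long cycle.

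If you want to repair your approach, you would need to replace the bare ``shortest-cycle'' idea with this $\oplus$-aware pairing (or something equivalent) and then give a concrete selection rule for $p,q,k,l$ along the cycle.
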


\begin{proof}
Let $B$ be an ASM extension of $P$ where,
as above, we
distinguish  each  $+1$ of $P$ by $\oplus$.
Consider the $(0,+1,-1)$-matrix $E=B-P$. We construct a bipartite graph $G$, the bipartition of  whose vertices is given by the set $U$ of positions in which $E$ has $+1$s and the set $V$ of positions in which $E$ has $-1$s. There are an equal number of $+1$s and $-1$s in each row and in each column of $E$ so that $|U|=|V|$.  In each row and in each column of $B$  a  $\oplus$, if not the first or last $+$ in its row or column,  lies between two consecutive $-1$s of $B$, and thus there are an equal number of $+1$s and $-1$s of $B$ on either side (horizontally and vertically) of such a  $\oplus$; the $+1$s and $-1$s alternate from $+1$ to $-1$ to the left (respectively, above) a $\oplus$ and from $-1$ to $+1$ to the right (respectively, below) a $\oplus$. The {\it horizontal edges} of $G$ 
are obtained by  joining by an edge (i) the position of a $+1$ which is to  the left of the $\oplus$ in its row to the position containing the   $-1$ in its row that follows it, and  (ii)  the position of a $-1$ which is to the right of the $\oplus$ in its row to the position of the $+1$ that follows it.
The {\it vertical edges} of $G$ are similarly defined. Thus both the horizontal edges and the vertical edges form  perfect matchings of $G$, and each vertex of $G$ has degree equal to $2$, meeting exactly one horizontal edge and exactly one vertical edge. It follows that the edges of $G$ can be partitioned into cycles $\gamma_1,\ldots,\gamma_k$, whose edges alternate between horizontal and vertical. 
This is illustrated in Figures 2 and 3.

\bigskip\medskip
\centerline{\begin{tabular}{cc}
\begin{tabular}{|c|c|c|c|c|c|c|c|c|}
\hline
&$\oplus$&&&&&&&\\\hline
$\oplus$&$-$&&&+&&&&\\\hline
&&&+&$-$&$\oplus$&&&\\\hline
&&+&$-$&$\oplus$&$-$&+&&\\\hline
&+&$-$&$\oplus$&$-$&+&$-$&+&\\\hline
&&$\oplus$&$-$&+&$-$&+&&\\\hline
&&&+&$-$&+&&$-$&$\oplus$\\\hline
&&&&+&&$-$&$\oplus$&\\\hline
&&&&&&$\oplus$&&\\\hline
\end{tabular}
&\begin{tabular}{c}\includegraphics{asm.1}\end{tabular}
\end{tabular}}

\bigskip
\centerline{\bf Figure 2.}
\medskip
\centerline{\begin{tabular}{cc}
\begin{tabular}{|c|c|c|c|c|c|}
\hline
&&&&$\oplus$&\\\hline
&&$\oplus$&&&\\\hline
&$\oplus$&$-$&+&&\\\hline
&&+&&$-$&$\oplus$\\\hline
$\oplus$&&&$-$&+&\\\hline
&&&$\oplus$&&\\\hline
\end{tabular}
&\begin{tabular}{c}\includegraphics{asm.2}\end{tabular}
\end{tabular}}

\bigskip
\centerline{\bf Figure 3.}

Continuing, let $C_i$ be the $(0,+1,-1)$-matrix whose nonzero entries are those
corresponding to the vertices of $\gamma_i$ $(i=1,2,\ldots,k)$. Then
it follows that $A+C_i$ is an ASM which is an extension of $A$; in fact,
$A+\sum_{i\in K}C_i$ is an extension of $A$ for all $\emptyset\ne K\subseteq \{1,2,\ldots,k\}$.  Thus we may now assume that $B=A+C_1$, that is, $G$ is a cycle of even length.

Consider the top row of $C_1$ that contains a position $(p,q)$ with a $-1$ (so there is a $\oplus$ above it) and assume without loss of generality that the edge of $\gamma_1$ goes to a $+1$ to its right, We follow $\gamma_1$ until we arrive at the first position $(r,s)$ with a $-1$ for which the edge of $\gamma_1$ goes to the left; such a position exists since $\gamma_1$ is a cycle. There are two possibilities according to whether we arrive at this position $(r,s)$ from above it or below it. See Figure 2.

If we arrive from above, then initial $-1$ in position $(p,q)$ along with the  $-1$ in the position $(r,s)$ give the matrix
$-T_{p,r:q,s}$. Considering  the positions containing a $\oplus$, we see that  $A+T_{p,r;q,s}$ is an elementary extension of $A$. Suppose we arrive at position $(r,s)$ from below it. Let  $(g,h)$ be the  position containing a $-1$  that came before $(r,s)$. Then  considering again where the $\oplus$s are, we see that $A-T_{g,r;h,s}$ is an elementary extension of $A$. See Figure 3.
\end{proof}

\begin{corollary}\label{cor:elemext}
Let $P$ be an $n\times n$ permutation matrix  corresponding to the permutation
$\pi$ of $\{1,2,\ldots,n\}$. Then $P$ is a maximal ASM if and only if
there do not exist integers $p,q,k,l$ with $1\le p<q\le n$ and $1\le k<l\le n$ such that 
\[\pi(p)>l, \pi(q)<k, \pi^{-1}(k)>q,\mbox{ and }\pi^{-1}(l)<p.\]
or
\[\pi(p)<k, \pi(q)>l, \pi^{-1}(k)<p,\mbox{ and }\pi^{-1}(l)>q.\]
\end{corollary}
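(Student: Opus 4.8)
The plan is to deduce Corollary \ref{cor:elemext} directly from Theorem \ref{th:elemext} together with the definition of an elementary ASM extension given just before that theorem. The two conditions displayed in the corollary are precisely the two sign-alternatives in the definition of $P\pm T_{p,q;k,l}$ being an elementary ASM extension of $P$, so the content of the corollary is just the assertion that $P$ fails to be maximal if and only if some elementary ASM extension exists.

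First I would argue the ``if'' direction (equivalently, the contrapositive of ``$P$ maximal implies no such $p,q,k,l$''). Suppose integers $p<q$ and $k<l$ exist satisfying one of the two displayed systems of inequalities. I need to check that the $2\times 2$ submatrix of $P$ on rows $p,q$ and columns $k,l$ is a zero submatrix: in the first case $\pi(p)>l\ge k$ forces $\pi(p)\notin\{k,l\}$, $\pi(q)<k\le l$ forces $\pi(q)\notin\{k,l\}$, and $\pi^{-1}(k)>q>p$, $\pi^{-1}(l)<p<q$ force that neither $k$ nor $l$ is hit by row $p$ or row $q$; the second case is symmetric. Hence the hypothesis of the construction preceding Theorem \ref{th:elemext} is met, and by that construction $P+T_{p,q;k,l}$ (resp.\ $P-T_{p,q;k,l}$) is an ASM extension of $P$, so $P$ is not maximal.

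Next I would argue the ``only if'' direction: if $P$ is not a maximal ASM, then by Theorem \ref{th:elemext} it has an elementary ASM extension, which by definition is exactly a matrix $P\pm T_{p,q;k,l}$ where $1\le p<q\le n$, $1\le k<l\le n$, the relevant $2\times2$ submatrix of $P$ is zero, and the permutation $\pi$ satisfies one of the two displayed inequality systems. In particular such integers $p,q,k,l$ exist, which is the negation of the stated condition. Combining the two directions gives the biconditional.

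I do not expect a genuine obstacle here, since the corollary is essentially a restatement of Theorem \ref{th:elemext} unwound through the definition; the only mild care needed is the bookkeeping in the ``if'' direction to confirm that the four inequalities really do guarantee the vanishing of the $2\times2$ submatrix (so that the extension construction applies) and that they match verbatim the conditions imposed in the definition of an elementary ASM extension. I would present these verifications briefly and then invoke Theorem \ref{th:elemext}.
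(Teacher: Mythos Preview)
Your proposal is correct and matches the paper's approach: the paper states the corollary with no separate proof, treating it as an immediate consequence of Theorem~\ref{th:elemext} together with the definition of an elementary ASM extension. Your unwinding of both directions, including the verification that the four inequalities force the $2\times 2$ submatrix to vanish, is exactly the intended argument.
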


Elementary extensions are obtained  by appropriately adding $\pm T_{p,q;k,l}$ to an ASM $A$ in such a way that the nonzero positions of $A$ and those of $T_{p,q:k,l}$ do not overlap.
It is also possible to add $\pm T_{p,q;k,l}$ to an ASM  where the nonzero positions overlap and the result is  an ASM. For example,
\[\left[\begin{array}{rrr}
+1&0&0\\
0&+1&0\\
0&0&+1\end{array}\right]
+
\left[\begin{array}{rrr}
-1&+1&0\\
+1&-1&0\\
0&0&0\end{array}\right]
=
\left[\begin{array}{rrr}
0&+1&0\\
+1&0&0\\
0&0&+1\end{array}\right].
\]
In fact, with two overlapping positions and starting with an ASM equal to  the permutation matrix $P$ as in the example, the result is another permutation matrix $Q$ if and only if the permutation corresponding to $Q$ is obtained from the permutation corresponding to $P$  by a transposition. Another  example, this time  with an overlap of one position, is
\[\left[\begin{array}{rrrrr}
0&0&+1&0&0\\
0&+1&0&0&0\\
+1&0&-1&0&+1\\
0&0&0&+1&0\\
0&0&+1&0&0\end{array}\right]+
\left[\begin{array}{rrrrr}
0&0&0&0&0\\
0&0&-1&+1&0\\
0&0&+1&-1&0\\
0&0&0&0&0\\
0&0&0&0&0\end{array}\right]=
\left[\begin{array}{rrrrr}
0&0&+1&0&0\\
0&+1&-1&+1&0\\
+1&0&0&-1&+1\\
0&0&0&+1&0\\
0&0&+1&0&0\end{array}\right]\]

The above examples lead us to to define an {\it ASM-interchange} to be the operation (or result thereof) of adding $\pm T_{p,q:r,s}$ to an ASM provided the result is also an ASM.
As shown above, ASM-interchanges generalize transpositions
of permutations.

We now show that every ASM can be generated from the special ASM
$I_n$ by a sequence of ASM-interchanges. Since we can generate all permutation matrices from $I_n$ by transpositions, this is equivalent to showing that every ASM results from the special subclass of
ASMs consisting of permutation matrices by a sequence of ASM-interchanges. In fact, we can get every ASM from a permutation matrix by ASM-interchanges with overlap of only 1 or 2.

\begin{theorem} Let $A$ be an $n\times n$ ASM. Then there is a sequence of ASM-interchanges such that starting from the identity matrix $I_n$ we obtain $A$.
\end{theorem}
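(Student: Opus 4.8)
The plan is to induct on the number of $-1$ entries of $A$, using that an ASM-interchange is reversible: if $B=A+\varepsilon T_{p,q;r,s}$ with $\varepsilon=\pm1$ is an ASM-interchange, then so is $A=B+(-\varepsilon)T_{p,q;r,s}$. Hence it suffices to produce, for every ASM $A$, a chain of ASM-interchanges from $A$ down to $I_n$, and then read that chain backwards. The base case is $A$ a permutation matrix: as noted just before the theorem, swapping the two $1$s of a permutation matrix lying in rows $p<q$ and columns $r<s$ is exactly the addition of $\mp T_{p,q;r,s}$ (an ASM-interchange with overlap $2$), and since $S_n$ is generated by transpositions we reach $I_n$ in this way.

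For the inductive step, suppose $A$ has a $-1$, and let $(i,j)$ be the position of a $-1$ with $i$ minimal and then $j$ minimal; property (iii) forces $i\ge2$, so rows $1,\dots,i-1$ contain no $-1$ and hence each contains exactly one nonzero, namely a $+1$. Let $(i,a)$ be the first nonzero of row $i$: it is a $+1$ with $a<j$, and it is the nonzero of row $i$ immediately preceding $(i,j)$, since only one $+1$ can come before the first $-1$ of a row. Because, by property (iv), the partial sum of column $a$ through row $i$ is at most $1$ while it already equals $1$ at $(i,a)$, column $a$ has no nonzero in rows $1,\dots,i-1$, so $(i,a)$ is also the first nonzero of column $a$. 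Likewise, let $(c,j)$ be the nonzero of column $j$ immediately above $(i,j)$: it is a $+1$; since no $-1$ lies in rows $1,\dots,i-1$, it is the first nonzero of column $j$ and therefore the unique nonzero of row $c$; in particular $(c,a)=0$.

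I then claim that $A'=A+T_{c,i;a,j}$ is an ASM, so that $A\to A'$ is an ASM-interchange and $A'$ has exactly one fewer $-1$. The matrix $T_{c,i;a,j}$ changes $A$ only in rows $c,i$ and columns $a,j$: it turns $(c,a)$ from $0$ into $+1$ and turns each of $(c,j)$, $(i,a)$, $(i,j)$ into $0$. One checks directly that row $c$ still has a single $+1$, now in column $a$; row $i$ has lost the consecutive pair $+1,-1$ in columns $a,j$ and still alternates beginning and ending with $+1$; column $j$ has lost its leading pair $+1,-1$ and still alternates; and column $a$ has merely had its leading $+1$ moved up from row $i$ to row $c$, the rest being unchanged. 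All row and column sums stay equal to $1$, so $A'$ is an ASM, and by the inductive hypothesis $A'$, and hence $A$, can be brought to $I_n$ by ASM-interchanges.

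The crux of the argument is the verification in the preceding paragraph that the ``corner flip'' $A+T_{c,i;a,j}$ is again an ASM, and this is exactly where choosing $(i,j)$ to be the topmost and then leftmost $-1$ pays off: minimality of $i$ is what makes rows $1,\dots,i-1$ nearly empty, promotes $(i,a)$ and $(c,j)$ to leading nonzeros of their columns, and forces $(c,a)=0$ --- precisely the conditions needed to keep the four affected rows and columns alternating after the flip. Everything else (reversibility, the reduction to reaching $I_n$, and the permutation-matrix base case) is routine.
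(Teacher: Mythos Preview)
Your proof is correct and follows essentially the same approach as the paper: induct on the number of $-1$ entries, locate the topmost-then-leftmost $-1$ at $(i,j)$, find the unique $+1$ to its left in row $i$ and the unique $+1$ above it in column $j$, apply the interchange $A+T_{c,i;a,j}$ to delete that $-1$, and then reduce the resulting permutation matrix to $I_n$ by transpositions. Your write-up is more explicit than the paper's in verifying that the four affected rows and columns remain alternating after the flip, but the underlying argument is identical.
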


\begin{proof}
If $A$ does not have any $-1$s, then $A$ is a permutation matrix
and we are done. Suppose $A=[a_{ij}]$ has at least one $-1$, and choose the  first row that has  a $-1$ and the first column in that row that has a $-1$. Suppose this $-1$ is in position $(q,l)$, that is, $a_{ql}=-1$. Since $A$ is an ASM, there exists unique $p<q$ such that 
$a_{pl}=+1$ and $a_{il}=0$ for all $i\ne p$ with $i<q$. Similarly, there exists unique  $k<l$ such that $a_{qk}=+1$ and $a_{qj}=0$ for all
$j\ne q$ with $j<l$. Then $A+T_{p,q;k,l}$ is an ASM with one less  $-1$ than $A$. Proceeding inductively, there is a sequence of ASM-interchanges  which reduces $A$ to an ASM  $B$ without any $-1$s.
Thus $B$ is a permutation matrix, and further ASM-interchanges (i.e. transpositions) reduces $A$ to $I_n$.
\end{proof}

\section{Concluding Remarks}

We conclude with some comments and open questions. As we have observed, every $(0,+1,-1)$-matrix (possibly rectangular)  is a submatrix of some ASM, and  every symmetric $(0,1,-1)$-matrix is a principal submatrix of some  symmetric ASM.  If $B$ is a $(0,+1,-1)$-matrix, we define   $\zeta(B)$ to be the smallest $n$ such that $B$ is a submatrix of an $n\times n$ ASM, and  
\[\zeta(k,l)=\max\{\zeta(B): B\mbox{ a  $k\times l$ $(0,1,-1)$-matrix}\}\quad (k,l\ge 1).\]
Note that if $B$ is a submatrix of an $n\times n$ ASM, then by taking a direct sum with an identity matrix, $B$ is a submatrix of an $m\times m$ ASM for all $m\ge n$.
  
Similarly, if  $C$  is a symmetric $(0,+1,-1)$-matrix, we define  $\zeta_p(C)$ to be the smallest $n$ such that $C$ is a principal submatrix of an $n\times n$ symmetric ASM, and 
\[\zeta_p(k)=\max\{\zeta_p(C): C\mbox{ a  $k\times k$ symmetric $(0,1,-1)$-matrix}\}\quad (k\ge 1).\] 

Let
\[B=\left[\begin{array}{c|c}
-&-\\ \hline
&+\end{array}\right].\]
Then 
\[A=\left[\begin{array}{c|c|c|c|c}
&&&+&\\ \hline
&+&&&\\ \hline
+&-&+&-&+\\ \hline
&&&+&\\ \hline
&+&&&\end{array}\right]\]
is an ASM whose submatrix determined by rows 3 and 4 and columns 2 and 4 equals $B$. Although $B$ is not symmetric, we can insert a 
new row and column to get an ASM $A'$ which contains $B$ as a principal submatrix in rows and columns 3 and 5:
\[\left[\begin{array}{c|c|c|c|c|c}
&&&&+&\\ \hline
&&+&&&\\ \hline
&+&-&+&-&+\\ \hline
+&&&&&\\ \hline
&&&&+&\\ \hline
&&+&&&\end{array}\right].\]

\smallskip\noindent
{\bf Question 1:}  What are the values of $\zeta(k,l)$ and
$\zeta_p(k)$, and which $(0,+1,-1)$-matrices achieve these maxima?

\smallskip
Let $J_{k} $ be the $k\times k$ matrix all of whose entries equal $+1$.
Then $J_k$ is a (principal) submatrix of the diamond ASM $D_{4k-3}$ and $-J_k$ is a (principal) submatrix of the diamond ASM $D_{4k-1}$.
It is natural to {\it conjecture} that $\zeta(k,k)=4k-1$, equivalently, that every
$k\times k$ $(0,+1,-1)$-matrix is a submatrix of  a $(4k-1)\times (4k-1)$ ASM.
\smallskip

In Section 6 we have seen that a permutation matrix may or may not be maximal, that is, may or may not have an ASM extension.

\smallskip\noindent
{\bf Question 2:}  Characterize maximal ASMs.

\smallskip

 Let $A$ be an $n\times n$ ASM. As previously remarked, 
  the 7 non-identity row and column permutations of $A$ corresponding to the dihedral group of order $8$ result in ASMs, some of  which may equal $A$.
  Let ${\mathcal C}(A)$ denote the equivalence class of all ASMs that can be gotten from $A$ by permuting rows and columns. Thus if $A$ is a permutation matrix, then ${\mathcal C}(A)$ is the set of all permutation matrices and has cardinality $n!$.

\smallskip\noindent
{\bf Question 3:}  What is 
\begin{equation}\label{eq:form0}
\max\{|{\mathcal C}(A)|: \mbox{ $A$ an  $n\times n$  ASM}\}?\end{equation}

\smallskip
From the above,  this maximum is at least $n!$ but, in general, it is much larger. 
 For example, consider 
the $n\times n$ ASM $A$ which is the direct sum of  $l$ copies of $D_3$ and one $I_1$ (so $n=3l+1$).
The number of ASMs in ${\mathcal C}(A)$ is readily seen to be
 \begin{equation}\label{eq:form1}
 \frac{1}{l!}{n\choose 3}^2{{n-3}\choose 3}^2\cdots {4\choose 3}^2=\frac{n!^2}{l!6^{2l}}.\end{equation}
 If $l\ge 2$, then (\ref{eq:form1}) is greater than $n!$.
 If  $A'$ is the direct sum of  $l-1$ copies of  $D_3$ and one  $D_4$, the number of ASMs in ${\mathcal C}(A')$ is
\begin{equation}\label{eq:form2}
2\frac{1}{(l-1)!}{n\choose 3}^2{{n-3}\choose 3}^2\cdots {7\choose 3}^2=\frac{2n!^2}{(l-1)!6^{2l}16}.\end{equation}
(The factor of 2 is due to the fact that the rows of $D_4$ can be reversed to create the other matrix we have referred to as $D_4$.)
Subtracting (\ref{eq:form1}) from (\ref{eq:form2}), we see that if $l>8$, 
\[|{\mathcal C}(A')|>|{\mathcal C}(A)|.\]
These calculations illustrate that the determination of (\ref{eq:form0}) is apt to be very difficult. A related question is:

\smallskip\noindent
{\bf Question 4:}  What is 
\begin{equation}\label{eq:form0}
\max\{|{\mathcal C}(A)|: \mbox{ $A$ an  $n\times n$ connected ASM}\}?\end{equation}


\begin{thebibliography}{99}
 
 \bibitem{DB} D.~Bressoud, {\it Proofs and Confirmations: The Story of the Alternating Sign Conjecture}, Math. Association of America, Cambridge University Press, Cambridge, 1999.
 
 \bibitem{BR} R.A.~Brualdi, H.J.~Ryser, {\it Combinatorial Matrix Theory}, Cambridge Univ. Press, Cambridge, 1991.
  
  \bibitem{GK} G.~Kuperberg, Another proof of the alternating sign matrix conjecture, {\it Inter. Math. Res. Notes}, 1996 (1996), 139--150.
  
  \bibitem{MRR} W.H.~Mills, D.P.~Robbins, and H.~Rumsey, Alternating-sign matrices and descending plane partitions, {\it J.~Combin. Theory, Ser. A} 34 (1983), 340--359.
  
  \bibitem{JP}  J.~Propp, The many faces of alternating sign matrices, 
  {\it Discrete models: combinatorics, computation, and geometry}, (Paris, 2001), 043Ð058 (electronic), 
  Discrete Math. Theor. Comput. Sci. Proc., AA, {\it Maison Inform. Math. Discr\`et. (MIMD)}, Paris, 2001.  
  
  \bibitem{DPR} D.P.~Robbins, The story of $1,2,7,42,429,7436,\ldots$, {\it Math. Intelligencer}, 13 (1991), 12--19.
  
\bibitem{DZ} D.~Zeilberger, Proof of the alternating sign matrix conjecture, {\it Electr. J. Combin.},  3 (1996), R13.
  
  \end{thebibliography}
\end{document}